\definecolor{aleacolor}{rgb}{0.16,0.59,0.78}
\renewcommand{\cite}{\citet}
\theoremstyle{plain}
\newtheorem{theorem}{Theorem}[section]                                          
\newtheorem{proposition}[theorem]{Proposition}                          
\newtheorem{lemma}[theorem]{Lemma}
\newtheorem{corollary}[theorem]{Corollary}
\theoremstyle{definition}
\newtheorem{definition}[theorem]{Definition}
\theoremstyle{remark}
\newtheorem{remark}[theorem]{Remark}
\makeatletter \@addtoreset{equation}{section} \makeatother
\begin{document}
  
\title{Random walk on a randomly oriented honeycomb lattice} 



\author{Gianluca Bosi}
\author{Massimo Campanino}

\address{Universit\`a degli Studi di Bologna\newline
Piazza di Porta San Donato 5, 40126\newline
Bologna, Italy.}

\address{Universit\`a degli Studi di Bologna\newline
Piazza di Porta San Donato 5, 40126\newline
Bologna, Italy.}

\email{gianluca.bosi4@unibo.it, massimo.campanino@unibo.it}


\subjclass[2000]{60J10, 60K37.}

\keywords{Markov chain, random environment, random graph, honeycomb lattice, hexagonal lattice, recurrence criteria, directed graph} 

\begin{abstract}
We study the recurrence behaviour of random walks on partially oriented
honeycomb lattices. The vertical edges are undirected while the orientation
of the horizontal edges is random:
depending on their distribution, we prove a.s. transience in some cases, and a.s.
recurrence in other ones. The results extend those
obtained for the partially oriented square grid lattices  (\cite{ref1}, \cite{ref2}). 
\end{abstract}

\maketitle

\section{Introduction}
\subsection{Motivation}
The behaviour of random walks on oriented two-dimensional graphs
has been object in the last years of several works. In particular new methods have
to be devised to settle the question of their recurrence or transience. The main
result of \cite{ref1} concerns the random walk on the two-dimensional
square lattice where the vertical edges are undirected while the edges
on each horizontal line are all oriented randomly and independently to the right
or to the left with equal probability. The result is that this random walk is transient
almost surely with respect to the environment; in \cite{ref2} the study is generalized to a more general class of environments, in particular
showing that the random walk is recurrent provided that the lines are oriented
through periodic functions. In the last decade many authors have investigated this and
related models: just to cite some of them, in \cite{guillotin3} a functional limit theorem for the random walk is proved, in \cite{guillotin4} a local limit theorem is established, and in \cite{guillotin5} the range of the walk is analyzed; in \cite{pene} the author considers the case where the lines are oriented by a sequence of stationary random variables, while in \cite{pene2} the model is generalized to one where the probability of staying on a line is non-constant. A common characteristic of the considered random walks is that
one can split them, apart from a time change, into a ``horizontal" and ``vertical"
component, where the latter is independent from the former. Moreover all these studies deal with the square grid lattice, 
and one can ask if these recurrence properties still hold as the geometry of the underlying two-dimensional 
graph changes: motivated by this question, in the present work we consider
the random walk on the honeycomb randomly directed lattice. Here the ``vertical
motion" obtained from the splitting is no longer independent from the ``horizontal" one: as a result, the subsequent steps of the ``vertical motion" have a markovian
dependency. For this reason, while some of the techniques used in \cite{ref1} and \cite{ref2} can be properly extended and
adopted in our study, in several parts of the proof we need to develop new ideas
and techniques. While we consider just a specific example also for reasons of simplicity, 
we expect  in the future to be able to extend
 our approach to a more general setting.
\subsection{Notation and results}
Let $\mathbf{L}:=(\mathbb{Z}^2,E)$ be the square grid lattice, i.e. $E$ is the set of nearest neighbours in $\mathbb{Z}^2$. The honeycomb lattice can be defined as the sub-graph obtained from $\mathbf{L}$ by eliminating the following set of edges (see figure \ref{figure::honeycomb})
\begin{align*}
&\{((2j,2k),(2j,2k+ 1))|\,j,k\in\mathbb{Z}\}\\
\cup&\{((2j,2k+1),(2j,2k))|\,j,k\in\mathbb{Z}\}\\
\cup&\{((2j+1,2k+1),(2j+1,2k+2))|\,j,k\in\mathbb{Z}\}\\
\cup&\{((2j+1,2k+2),(2j+1,2k+1))|\,j,k\in\mathbb{Z}\}.
\end{align*}
Then we can define partially oriented versions of the lattice by imposing a certain orientation on the horizontal edges (this can be done either deterministically or randomly), while keeping the vertical edges unoriented. Our work is devoted to the study of the recurrence (transience) behaviour of a simple random walk $(M_n)_{n\geq 0}$ on such oriented lattices.
Let $(\Omega, \mathcal{F}, \mathbb{P})$ be a probability space. The first result we prove is the following:
\begin{figure} 
\begin{center}
\includegraphics[width=9cm]{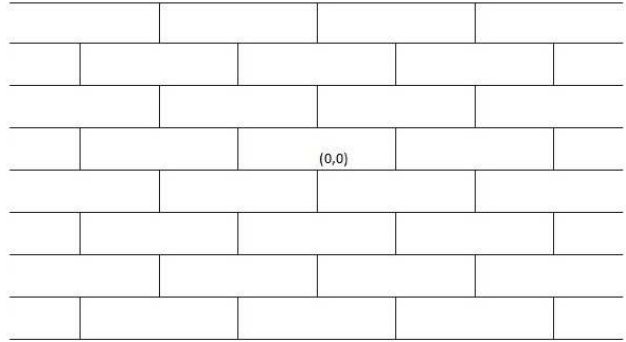}
\caption{The honeycomb lattice}
\label{figure::honeycomb}
\end{center}
\end{figure}

\begin{theorem}\label{Th:: RANDOM}
Let $(\epsilon_y)_{y\in\mathcal{Z}}$ be a i.i.d. family of $\{-1,1\}$-valued Rademacher random variables and denote by $\mathbf{H}_\epsilon$ the honeycomb lattice oriented randomly as follows: if $\epsilon_y=1$ there is only a right-directed edge between $(x,y)$ and $(x+1,y)$, $\forall x\in\mathbb{Z}$; if $\epsilon_y=-1$, only a left-directed one.
Then the random walk $(M_n)_{n\geq 0}$ on $\mathbf{H}_\epsilon$ is a.s. transient.
\end{theorem}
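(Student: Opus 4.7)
My strategy follows the blueprint of \cite{ref1,ref2}: decompose $(M_n)$ at the times of vertical-edge traversals, condition on the vertical trajectory, and use an anti-concentration bound on the resulting Rademacher sum to obtain a summable annealed return probability. Let $0=\tau_0<\tau_1<\tau_2<\cdots$ be the times at which $(M_n)$ uses a vertical edge. Between $\tau_i$ and $\tau_{i+1}$, the walker sits at height $y_i:=Y_{\tau_i}$ and performs $G_{i+1}$ horizontal steps in the direction $\epsilon_{y_i}$, where the $G_j$ are i.i.d.\ geometric of parameter $1/2$ and independent of $\epsilon$. Hence
$$X_{\tau_k}=\sum_{i=0}^{k-1}\epsilon_{y_i}G_{i+1}=\sum_{y\in\mathbb{Z}}\epsilon_y\,S_y(k),\qquad S_y(k):=\sum_{i<k,\,y_i=y}G_{i+1}.$$
The honeycomb-specific feature is that each vertex has a \emph{unique} vertical neighbour, whose direction is forced by the parity of $x+y$. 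A short inspection shows that during a horizontal run of length $G$ this parity flips iff $G$ is even, which happens with probability $2/3$. Thus the vertical increments $\delta_i=y_i-y_{i-1}\in\{-1,+1\}$ are not i.i.d.\ but form an ergodic two-state Markov chain (with stationary mean $0$ and lag-one correlation $-1/3$). I would first derive the expected diffusive asymptotics of $y_k=\sum_{i\le k}\delta_i$: the CLT (variance of order $k$), the local CLT $\mathbb{P}(y_k=0)=O(k^{-1/2})$, and the local-time lower bound $\sum_y L_k(y)^2\gtrsim k^{3/2}$ with high probability.

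Given these vertical estimates, a Littlewood--Offord / Esseen anti-concentration bound applied conditionally to the Rademacher sum $\sum_y\epsilon_y S_y(k)$ gives, on a high-probability event,
$$\mathbb{P}\bigl(X_{\tau_k}=0\mid(y_i),(G_j)\bigr)\le \frac{C}{\sqrt{\sum_y S_y(k)^2}}=O(k^{-3/4}),$$
where the final equality uses $S_y(k)\asymp L_k(y)$ via a law of large numbers on the geometric run lengths. Combining this with the vertical local CLT yields the annealed bound $\mathbb{P}(M_{\tau_k}=0)=O(k^{-5/4})$, and a straightforward interpolation covers times $n\in[\tau_k,\tau_{k+1})$. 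Summability of $\mathbb{P}(M_n=0)$ over $n$ together with Fubini and Borel--Cantelli then forces $\mathbb{P}_\epsilon(M_n=0\text{ i.o.})=0$ for $\mathbb{P}$-a.e.\ environment, which is the claimed a.s.\ transience.

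The main obstacle, and the place where the honeycomb genuinely departs from the square lattice, is the local-time step for the Markov-driven vertical walk. In \cite{ref1} the analogous step exploits that the vertical walk is a standard simple random walk, for which Ray--Knight-type descriptions of local times and explicit characteristic-function computations are available. Here the i.i.d.\ input must be replaced by mixing or regeneration arguments for the two-state parity chain, and the local-time estimates underlying $\sum_y S_y(k)^2\gtrsim k^{3/2}$ must be re-established in this Markov setting while remaining quantitative enough to plug into the conditional anti-concentration bound. This is where most of the essentially new work lies.
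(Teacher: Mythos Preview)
Your decomposition and the identification of the vertical increments as a two-state Markov chain are exactly what the paper does, and the annealed-to-quenched passage via Fubini is also the same. The gap is in the anti-concentration step.

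The claimed inequality
\[
\mathbb{P}\Bigl(\sum_{y}\epsilon_y S_y(k)=0\ \Big|\ (y_i),(G_j)\Bigr)\le \frac{C}{\sqrt{\sum_y S_y(k)^2}}
\]
is not a valid Littlewood--Offord/Esseen bound for a pure Rademacher sum. If, say, every nonzero $S_y$ equals the same large integer $N$ over $R$ sites, the left-hand side is $\asymp R^{-1/2}$ while the right-hand side is $\asymp (N\sqrt{R})^{-1}$. The sharp small-ball bound for $\sum_y a_y\epsilon_y$ is $O\bigl(\#\{y:a_y\neq 0\}^{-1/2}\bigr)$ (Kolmogorov--Rogozin/Erd\H{o}s), and here $\#\{y:S_y(k)\neq 0\}$ is the range of the vertical walk, of order $k^{1/2}$. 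Conditioning on $(y_i)$ and $(G_j)$ therefore only yields $O(k^{-1/4})$; combined with the vertical local CLT you get $O(k^{-3/4})$, which is not summable. The ``$\sum_y L_k(y)^2\gtrsim k^{3/2}$'' estimate is true but does not feed into any Rademacher small-ball inequality in the way you use it.

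The paper obtains the missing extra $k^{-1/2}$ by exploiting the geometric run lengths as a \emph{second} source of smoothing, not by conditioning them away. Concretely, it splits according to whether the environment-weighted local time $\sum_y\epsilon_y\hat\eta_{2n-1}(y)$ exceeds $n^{1/2+\delta}$ (event $B_n$) or not. On $B_n$, the geometric variables give $X_{2n}$ a drift of order $n^{1/2+\delta}$ and a Chernoff bound yields $\mathbb{P}(X_{2n}=0\mid\mathcal{F}_{2n}\vee\mathcal{G})=O(e^{-n^{\delta'}})$. On $A_n\setminus B_n$, one uses the uniform bound $\mathbb{P}(X_{2n}=0\mid\mathcal{F}_{2n}\vee\mathcal{G})=O(n^{-1/2})$ coming from the characteristic function of the geometrics, together with the genuine Littlewood--Offord estimate $\mathbb{P}(A_n\setminus B_n\mid\mathcal{F}_{2n})=O(n^{-1/4+\varepsilon})$ (this is where range $\sim n^{1/2}$ enters). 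The product $n^{-1/2}\cdot n^{-1/2}\cdot n^{-1/4+\varepsilon}$ gives the summable $n^{-5/4+\varepsilon}$. So the honeycomb-specific work is not a harder local-time lower bound, but rather redoing the large-deviation estimates for $\max|Y_k|$ and $\max_y\eta_n(y)$ (the event $A_n^c$) for the Markov-driven vertical walk; the paper handles this with a transfer-matrix computation and a Tauberian estimate on first-return times.
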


Since the transience behaviour is caused by the presence and the size of fluctuations in the orientations, we impose periodic orientations and prove the following result.

\begin{theorem}\label{Th:: PERIODIC}
Let $Q>1$ be an even integer and, given a $Q$-periodic function $f:\mathbb{Z}\longrightarrow \{-1,1\}$ such that $\sum_{k=0}^{Q-1}f(k)=0$, consider the oriented honeycomb lattice $\mathbf{H}_{f}$ whose horizontal edges are oriented according to the value of $f$: that is, if $f(k)=1$ then the edges with ordinate $k$ are right-directed, otherwise they are left-directed.
Then the random walk $(M_n)_{n\geq 0}$ on $\mathbf{H}_f$ is recurrent.
\end{theorem}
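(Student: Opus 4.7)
The strategy is to adapt the framework of \cite{ref2} by decomposing the trajectory at the times of successive vertical steps. Let $T_0=0<T_1<T_2<\dots$ be those times, $G_k:=T_k-T_{k-1}-1\ge 0$ the number of horizontal steps performed in between, and $(X^{(v)}_k,Y^{(v)}_k):=(X_{T_k},Y_{T_k})$. Since at each vertex the walker picks uniformly between the unique horizontal and the unique vertical outgoing edge, the $G_k$ are i.i.d.\ geometric with $\mathbb{P}(G_k=j)=2^{-j-1}$, and all horizontal steps between $T_{k-1}$ and $T_k$ take place on row $Y^{(v)}_{k-1}$ in the deterministic direction $f(Y^{(v)}_{k-1})$. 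The vertical direction $\eta_k\in\{\pm 1\}$ is dictated by the local vertex type and is therefore a function of the parity $P_k:=(X_{T_k}+Y_{T_k})\bmod 2$. Because each horizontal step flips the parity of $x+y$, $(P_k)$ is an ergodic two-state Markov chain on $\{0,1\}$ (its non-trivial transition probability comes from $\mathbb{P}(G\text{ even})=2/3$), and $(X^{(v)}_k,Y^{(v)}_k,P_k)$ is a genuine Markov chain on $\mathbb{Z}^2\times\{0,1\}$.

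Next one establishes the diffusive scaling of this embedded chain. For the vertical coordinate, $Y^{(v)}_n=\sum_{k=1}^n\eta_k$ is an additive functional of the ergodic chain $(P_k)$, so a Kipnis-Varadhan martingale decomposition yields a functional CLT with positive limiting variance together with a local limit theorem on scale $\sqrt{n}$. For the horizontal coordinate, split
\[
X^{(v)}_n=\sum_{k=1}^n f\bigl(Y^{(v)}_{k-1}\bigr)+\sum_{k=1}^n(G_k-1)\,f\bigl(Y^{(v)}_{k-1}\bigr).
\]
The second sum is a martingale with bounded, mean-zero increments and contributes $O(\sqrt{n})$ by Burkholder; the first, a deterministic functional of $Y^{(v)}$, is controlled using the vertical LCLT together with the zero-mean hypothesis $\sum_{r=0}^{Q-1}f(r)=0$: the occupation counts at levels congruent mod $Q$ agree up to fluctuations of order $\sqrt{n}$, and the periodic cancellation reduces the sum to $O(\sqrt{n})$ (in fact smaller, but this is enough). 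Combining these pieces one derives a joint local limit theorem
\[
\mathbb{P}\bigl((X^{(v)}_n,Y^{(v)}_n,P_n)=(0,0,p)\bigr)\asymp \frac{1}{n}
\]
for each $p\in\{0,1\}$, so that the Green function of the three-dimensional Markov chain $(X^{(v)}_n,Y^{(v)}_n,P_n)$ diverges at the origin. By the standard countable-state-space dichotomy this forces recurrence, which transfers to $(M_n)$ because the two chains differ only by a time change with i.i.d.\ geometric (hence a.s.\ finite) gaps.

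The main obstacle is the joint local limit theorem. In \cite{ref2} the vertical increments are i.i.d.\ and independent of the geometric waiting times, so the one-step characteristic function factorises immediately; in the honeycomb case the parity of $G_k$ determines $\eta_k$, so horizontal size and vertical direction are intrinsically correlated through $(P_k)$. The Fourier analysis therefore has to be performed at the level of the Markov renewal kernel of $(G_k,\eta_k,P_k)$, checking both that the limiting $2\times 2$ covariance is non-degenerate (this is where the evenness of $Q$ together with $\sum f=0$ enters) and that the associated transfer operator on the two-dimensional torus has no spurious eigenvalue of modulus one beyond those forced by lattice parity. Once this spectral input is secured, the rest of the argument follows the Campanino-P\'etritis template.
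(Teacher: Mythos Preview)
Your decomposition is correct and your identification of the parity chain $(P_k)$ as the driver of the vertical motion matches the paper's $(\nu_k)$. But the route you propose is genuinely different from the one the paper takes, and the hard step in your plan is left almost entirely open.

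The paper does \emph{not} attempt a joint two-dimensional local limit theorem for $(X^{(v)}_n,Y^{(v)}_n)$. Instead it conditions on the full vertical trajectory and proves only a lower bound. Concretely: (i) it introduces the finite-state chain $\overline W_n=(\overline Y_{n-1},\nu_{n-1};\overline Y_n,\nu_n)$ on $(\mathbb Z_Q\times\{\pm1\})^2$ and, crucially, splits the horizontal displacements according to whether the next vertical step keeps or reverses direction, giving two signed counts $S_{n,o},S_{n,e}$; (ii) a multidimensional local limit theorem for the \emph{occupation vector} of $\overline W_n$ (Kolmogorov) yields $\mathbb P(|S_{n,e}|+|S_{n,o}|\le C\sqrt n\mid W_{2n}=W_0)\ge\delta_C>0$; (iii) for any vertical path satisfying this constraint, a hands-on one-dimensional Fourier computation with the explicit characteristic functions $\chi_o,\chi_e$ of the odd/even conditioned geometrics gives $\mathbb P(X_{2n}=0\mid\text{path})\ge c/\sqrt n$. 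Multiplying gives the $c'/n$ lower bound needed for recurrence. The odd/even splitting is the device that replaces your Markov-renewal Fourier analysis: once you condition on $(\nu_k)_{k\le 2n}$, each $G_k$ is independent with a \emph{known} conditional law (odd-geometric or even-geometric), so the conditional characteristic function of $X_{2n}$ factorises cleanly.

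In your plan the decisive step --- ``combining these pieces one derives a joint local limit theorem'' --- is a gap. Showing that each of the two sums in your decomposition of $X^{(v)}_n$ is $O(\sqrt n)$ gives tightness, not a local limit theorem; and the first sum $\sum_k f(Y^{(v)}_{k-1})$ is a deterministic functional of the very process $Y^{(v)}$ whose LLT you are simultaneously invoking, so the two pieces cannot be combined by independence. Your final paragraph is the honest statement of what would actually have to be done (Nagaev--Guivarc'h spectral analysis of the twisted transfer operator on $\mathbb Z_Q\times\{\pm1\}$, checking non-degeneracy of the Hessian and the absence of peripheral spectrum away from the lattice-forced points), and none of this is carried out. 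Two minor points: the increments $(G_k-1)f(Y^{(v)}_{k-1})$ are not bounded (they have geometric tails), though square-integrability suffices for your purpose; and the parenthetical ``in fact smaller'' for $\sum_k f(Y^{(v)}_{k-1})$ is not generally true --- for generic $f$ this sum has a genuine Gaussian limit on scale $\sqrt n$.
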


Finally, we consider the case of horizontal orientations prescribed by a random perturbation of a periodic function.
\begin{theorem}\label{Th:: PERTURBED}
Let $(\epsilon_y)_{y\in\mathbb{Z}}$ and $f$ as above and define for every $y\in\mathbb{Z}$
$$\overline\epsilon_y:=(1-\lambda_y)f(y)+\lambda_y\epsilon_y$$
where $\lambda=(\lambda_y)_{y\in\mathbb{Z}}$ is a $\{0,1\}$-valued sequence of independent r.v, independent of $\epsilon$ s.t. $$\mathbb{P}(\lambda_y=1)=\frac{c}{|y|^\beta}.$$
Denote by $\mathbf{H}_{\overline \epsilon,\lambda}$ the honeycomb lattice with such random orientations. Then

(i) If $\beta<1$, the random walk $M_n$ on $\mathbf{H}_{\overline \epsilon,\lambda}$ is ($\epsilon_y,\lambda_y$)-a.s. transient.

(ii) If $\beta>1$, the random walk $M_n$ on $\mathbf{H}_{\overline \epsilon,\lambda}$ is ($\epsilon_y,\lambda_y$)-a.s. recurrent.
\end{theorem}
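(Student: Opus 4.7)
The proof naturally splits along the dichotomy $\beta>1$ versus $\beta<1$, the former reducing to Theorem \ref{Th:: PERIODIC} and the latter to (a reinforcement of) Theorem \ref{Th:: RANDOM}.

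For Part (ii), since $\sum_{y\in\mathbb{Z}}c/|y|^\beta<\infty$, the first Borel--Cantelli lemma yields, a.s.\ in $\lambda$, a finite random $K:=\max\{|y|:\lambda_y=1\}$, beyond which $\overline\epsilon_y=f(y)$. Hence $\mathbf{H}_{\overline\epsilon,\lambda}$ and $\mathbf{H}_f$ coincide outside the horizontal slab $\{|y|\le K\}$, and a pathwise coupling shows that as long as the walk is outside this slab it behaves exactly like a walk on $\mathbf{H}_f$. By Theorem \ref{Th:: PERIODIC} every excursion outside the slab terminates almost surely, so the walk visits the slab infinitely often; to conclude recurrence to the origin I would then argue that the horizontal positions at successive slab-entry times form a centred $\mathbb{Z}$-valued random walk (centredness comes from translation invariance in $x$ together with the fact that $\mathbf{H}_f$ itself has no horizontal drift, otherwise it could not be recurrent), hence is itself recurrent.

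For Part (i), the strategy is to adapt the proof of Theorem \ref{Th:: RANDOM}. Up to a time change, the walk $(M_n)$ decomposes into a vertical component and a horizontal one, the latter being a sum of $\overline\epsilon_y=(1-\lambda_y)f(y)+\lambda_y\epsilon_y$ weighted by the local times $L_y(n)$ at height $y$. The deterministic part $\sum_y(1-\lambda_y)f(y)L_y(n)$ has amplitude $O(\sqrt n)$ because $\sum_{k=0}^{Q-1}f(k)=0$; this is precisely what makes $\mathbf{H}_f$ recurrent. The genuinely new random contribution is
\begin{equation*}
R_n:=\sum_{y\in\mathbb{Z}}\lambda_y\,\epsilon_y\,L_y(n).
\end{equation*}
A typical walk visits $O(\sqrt n)$ rows with $L_y(n)=O(\sqrt n)$, and a law-of-large-numbers estimate shows that $\sum_{|y|\le C\sqrt n}\lambda_y\asymp n^{(1-\beta)/2}$ of those rows are perturbed. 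Conditionally on $(L_y(n),\lambda_y)$, the $\epsilon_y$ are i.i.d.\ Rademacher and independent of the rest, so
\begin{equation*}
\mathrm{Var}(R_n\mid L,\lambda)=\sum_y \lambda_y\,L_y(n)^2 \gtrsim n^{(1-\beta)/2}\cdot n = n^{(3-\beta)/2}.
\end{equation*}
Since $(3-\beta)/4>1/2$ whenever $\beta<1$, the horizontal fluctuations outpace the vertical scale $\sqrt n$, and a Borel--Cantelli argument along a dyadic sequence $n_k=2^k$ (as in Theorem \ref{Th:: RANDOM}) yields transience.

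The main obstacle is Part (i): the estimates on the local times $L_y(n)$ used for Theorem \ref{Th:: RANDOM} must be revisited in the presence of the markovian dependence between horizontal and vertical components peculiar to the honeycomb, the feature highlighted in the introduction. In particular, one has to ensure that the periodic piece, despite its amplitude $O(\sqrt n)$, does not systematically cancel $R_n$ with probability bounded away from zero; this calls for a conditional independence argument exploiting that the signs $\epsilon_y$ at perturbed rows are independent of $f$ and of the vertical motion. In Part (ii) the subtlety is the absence of reversibility, which precludes a standard electrical-network finite-defect argument and forces the pathwise coupling sketched above.
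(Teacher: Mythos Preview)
Your heuristic for Part~(i) captures the correct mechanism---the variance of $R_n$ is of order $n^{(3-\beta)/2}$, which beats the diffusive scale---but the argument as written is not rigorous and is not how the paper proceeds. The claim that the periodic piece has amplitude $O(\sqrt n)$ holds only with positive probability, not uniformly, and the paper does not argue via a dyadic Borel--Cantelli scheme. Instead it reuses verbatim the decomposition $p_n=p_{n,1}+p_{n,2}+p_{n,3}$ from the proof of Theorem~\ref{Th:: RANDOM}, replacing $\epsilon_y$ by $\overline\epsilon_y$ in the definition of $B_n$; all estimates carry over except the bound on $\mathbb P(A_n\setminus B_n\mid\mathcal F)$, for which the paper simply invokes Proposition~3.2 of \cite{ref2}, giving $\mathbb P(A_n\setminus B_n\mid\mathcal F)=\mathcal O(n^{-\delta_\beta})$ for some $\delta_\beta>0$ whenever $\beta<1$. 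So Part~(i) is essentially bookkeeping plus an imported lemma, and your variance computation is morally the content of that lemma.

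For Part~(ii) there is a genuine gap. Your Borel--Cantelli step agrees with the paper and is fine. But the excursion/coupling argument you propose does not close: the horizontal positions at successive slab-entries do \emph{not} obviously form a random walk with i.i.d.\ increments, since the entry can occur at either the top or the bottom of the slab and on the honeycomb the parity of the $x$-coordinate is entangled with the entry height and with the direction $\nu$ of the last vertical step. At best you obtain a Markov-additive process on $\mathbb Z\times S$ for a finite set $S$, whose recurrence does not follow from ``centred, hence recurrent''. Even granting that, you must still reach the specific vertex $(0,0)$ from the slab boundary along \emph{directed} horizontal edges, which is not automatic. The paper sidesteps all of this by adapting the proof of Theorem~\ref{Th:: PERIODIC} directly: with $\overline S_{n,e},\overline S_{n,o}$ the analogues of $S_{n,e},S_{n,o}$ built from $\overline\epsilon$ instead of $f$, one has (Lemma~\ref{lemma:: L}) $|\overline S_{n,e}|\le 2\sum_{i\le 2n}\mathbf 1_{\{|Y_i|\le L\}}+|S_{n,e}|$, and the expected local time of the vertical skeleton in the slab, conditional on $\mathcal Z_n$, is $O(\sqrt n)$ (Lemma~\ref{corollary:bound}). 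Together with Proposition~\ref{lemma:: SeSoestimate} this yields $\mathbb P(|\overline S_{n,e}|+|\overline S_{n,o}|\le C\sqrt n\mid\mathcal Z_n)\ge K>0$, after which the local limit estimate of Lemma~\ref{LLT-Xn} applies verbatim and recurrence follows. In short: the paper treats the finite defect as an $O(\sqrt n)$ perturbation \emph{inside} the recurrence estimate, rather than trying to excise it topologically.
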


\section{Technical preliminaries}
\subsection{Decomposition of the random walk}
Following \cite{ref1}, we decompose the random walk into two components that, if sampled on a particular sequence of random times, have the same recurrence behaviour of $(M_n)$.

We begin with the following observation.
Let $\xi$ be a random variable with geometric distribution of parameter $\frac{1}{2}$, and consider the event of an even or null outcome $A:=\bigcup_{m\in\mathbb{N}\cup\{0\}}\{\xi =2m\}$. We have

$$\mathbb{P}(A)=\sum_{m= 0}^\infty \mathbb{P}(\xi=2m)=\frac{1}{2}\sum_{m= 0}^\infty \left(\frac{1}{2}\right)^{2m}=\frac{2}{3}.$$
Obviously $\mathbb{P}(A^c)=\frac{1}{3}$. Now, if we interpret $\xi$ as the absolute value of the first horizontal displacement of $(M_n)$ (i.e. the number of subsequent horizontal steps before the first vertical one), we immediately see that after an odd outcome the random walk will perform a vertical down-directed step, otherwise a up-directed one. 
With this observation in mind, we give the following definition.
\begin{definition}
The \textit{vertical skeleton} of $(M_n)_{n\geq0}$ is the Markov process $(Y_n, \nu_n)_{n\geq0}$ with values in $\mathbb{Z}\times\{-1,1\}$ defined by the following transition probabilities:
\begin{align*}
& p_{(y,1), (y+1,1)}=p_{(y,-1),(y-1,-1)}=\frac{1}{3}\\
& p_{(y,1),(y-1,-1)}=p_{(y,-1)(y+1,1)}=\frac{2}{3}
\end{align*}
for any $y\in\mathbb{Z},$ and $\mathbb{P}((Y_0,\nu_0)=(0,1))=1.$ 
\end{definition}
The first component of the vertical skeleton represents the projection  on the $y$-axis of the position of the random walk, seen at the times of successive vertical steps, while the second component represents the speed -or direction- of the last step. 
\begin{remark}
By definition, the skeleton random walk $(Y_{k},\nu_k)$ satisfies $$Y_k=\sum_{i=1}^k \nu_i,$$ and the transition matrix of the Markov chain $(\nu_k)_{k\geq 0}$ has the following form:
\[ \pi_{\nu}=\left( \begin{array}{cc}
q & 1-q \\
1-q & q \end{array} \right)\text{,\,\, with } q=\frac{1}{3}.\]
\end{remark}
Now we define the occupation measure of the vertical skeleton and the embedded random walk.
\begin{definition}
Let $n\geq 0$. Let for every $y\in\mathbb{Z}$
$$\eta_n(y,\pm1):=\sum_{k=0}^n\mathbf{1}_{\{(Y_k,\nu_k)=(y,\pm1)\}}.$$
We define the total \textit{occupation measure}
$\eta_n$ at $y$ by
$$\eta_n(y):=\eta_n(y,1)+\eta_{n}(y,-1).$$
\end{definition}

\begin{definition}
Let $(\xi_i)^{(y)}_{i\geq0, \, y\in \mathbb{Z}}$  be a family of i.i.d. geometric random variables with parameter $p=\frac{1}{2}$, defined on $(\Omega, \mathcal{A}, \mathbb{P})$. We call \textit{embedded random walk} the process $(X_n)_{n\geq0}$ defined by\\
$$X_n:=\sum_{y\in\mathbb{Z}}\epsilon_y\sum_{i=1}^{\eta_{n-1}(y)}\xi_i^{(y)}$$
with the convention that $\sum_i$ vanishes whenever $\eta_{n-1}(y)=0$.
\end{definition}
$X_n$ represents the abscissa of the random walk $(M_n)_{n \geq 0}$ immediately after the $n$-th vertical movement has been performed.

Let $T_n=n+\sum_{y\in\mathbb{Z}}\sum_{i=1}^{\eta_{n-1}(y)}\xi_i^{(y)}$ be the time just after the random walk $(M_n)$ has performed its $n$-th vertical move. Then it's straightforward to see that $M_{T_n}=(X_n,Y_n),$ where $(Y_n)$ is the first component of the vertical skeleton $(Y_n, \nu_n).$ Now denote by $\sigma_n$ the sequence of consecutive returns to $0$ of $(Y_n)$ (by lemma \ref{Y_n_behaviuour} below it follows that $\sigma_n<\infty$ almost surely $\forall n$.) Obviously, $M_{T_{\sigma_n}}=(X_{\sigma_n},0)$. 

Let $\mathcal{F}_n:=\sigma(\nu_k, k\leq n)$, $\mathcal{G}:=\sigma(\epsilon_y,y\in\mathbb{Z})$ and $\mathcal{F}\vee\mathcal{G}=\sigma(\mathcal{F}\cup\mathcal{G})$, where $\mathcal{F}=\vee_n \mathcal{F}_n$. We shall need the following lemmas.
\begin{lemma}\label{Y_n_behaviuour}
As $n\to\infty$
$$\mathbb{P}_0(Y_{2n}=0)\sim\frac{C}{\sqrt{n}},$$
with $C>0$. In particular, $Y_n$ is recurrent.
\end{lemma}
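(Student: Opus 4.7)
The plan is to realise $Y_n = \sum_{k=1}^n \nu_k$ as an additive functional of the two-state Markov chain $(\nu_k)$---whose transition matrix $\pi_\nu$ is doubly stochastic with non-trivial eigenvalue $2q-1 = -1/3$---and to obtain the claimed local asymptotic by a Fourier/spectral analysis of the twisted transfer operator associated with the Markov additive process $(Y_n,\nu_n)$.

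A preliminary variance computation fixes $\sigma^2 := \lim_n \mathrm{Var}(Y_n)/n$. Under the (uniform) stationary law of $(\nu_k)$ one has $\mathbb{E}[\nu_0\nu_k]=(-1/3)^{|k|}$, so the usual Green--Kubo sum yields
\begin{equation*}
\sigma^2 = 1 + 2\sum_{k=1}^\infty\!\left(-\tfrac13\right)^k = \tfrac12,
\end{equation*}
the non-stationary start $\nu_0=1$ contributing only an $O(1)$ boundary correction. I would then introduce the twisted transfer matrix
\begin{equation*}
M(t) = \begin{pmatrix} qe^{it} & (1-q)e^{-it} \\ (1-q)e^{it} & qe^{-it} \end{pmatrix},\qquad t\in[-\pi,\pi],
\end{equation*}
for which $\mathbb{E}[e^{itY_n}\mathbf{1}_{\{\nu_n=s'\}}\mid \nu_0=s] = (M(t)^n)_{s,s'}$. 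Since $M(t)$ is $2\times 2$, its eigenvalues are the explicit roots of $\lambda^2 - \tfrac{2}{3}\cos(t)\lambda - \tfrac13 = 0$, and expansion at the two points where $|\lambda(t)|=1$ gives
\begin{equation*}
\lambda_+(t) = 1 - \tfrac{t^2}{4}+O(t^4)\text{ near }t=0,\qquad \lambda_-(\pi+s) = -1+\tfrac{s^2}{4}+O(s^4)\text{ near }t=\pm\pi,
\end{equation*}
the double peak being the algebraic signature of the parity constraint $Y_n\equiv n\pmod 2$. Away from these peaks the spectral radius of $M(t)$ is bounded by some $\rho<1$.

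The local CLT then follows from Fourier inversion
\begin{equation*}
\mathbb{P}_0\bigl((Y_{2n},\nu_{2n})=(0,s)\bigr) = \frac{1}{2\pi}\int_{-\pi}^\pi (M(t)^{2n})_{1,s}\,dt
\end{equation*}
combined with the spectral decomposition $M(t)^{2n}=\lambda_+(t)^{2n}P_+(t)+\lambda_-(t)^{2n}P_-(t)$ and Laplace-type estimates around each peak, using the symmetry identities $P_+(0)=P_-(\pi)=\tfrac12\begin{pmatrix}1&1\\1&1\end{pmatrix}$. Summing over $s\in\{-1,+1\}$ gives $\mathbb{P}_0(Y_{2n}=0)\sim \sqrt{2/(\pi n)}$, which is the stated $C/\sqrt{n}$ with $C=\sqrt{2/\pi}>0$. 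Recurrence is then immediate: the joint chain $(Y_n,\nu_n)$ is irreducible on $\mathbb{Z}\times\{-1,+1\}$ and the divergence $\sum_n\mathbb{P}_0((Y_{2n},\nu_{2n})=(0,1))=+\infty$ forces $(0,1)$, hence in particular $\{Y_n=0\}$, to be visited infinitely often a.s.

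The only genuine delicacy I anticipate is properly handling the secondary Fourier peak at $t=\pm\pi$ imposed by the parity of $Y_n$; once this is included the argument is routine since the $2\times 2$ spectrum of $M(t)$ is explicit and the chain $(\nu_k)$ mixes at geometric rate.
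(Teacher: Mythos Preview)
Your argument is correct, and the explicit constant $C=\sqrt{2/\pi}$ checks out: the characteristic polynomial $\lambda^2-\tfrac{2}{3}\cos(t)\lambda-\tfrac13=0$ gives the expansions $\lambda_+(t)=1-t^2/4+O(t^4)$ near $t=0$ and $\lambda_-(\pi+s)=-1+s^2/4+O(s^4)$ near $t=\pi$, the two spectral projectors at those points are both $\tfrac12\bigl(\begin{smallmatrix}1&1\\1&1\end{smallmatrix}\bigr)$ (since $M(\pi)=-\pi_\nu$), and the two Gaussian peaks each contribute $(2\pi n)^{-1/2}$ after Fourier inversion and Laplace estimation. Recurrence of the irreducible period-$2$ chain $(Y_n,\nu_n)$ then follows from $\sum_n n^{-1/2}=\infty$.

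The paper, however, takes a one-line route: it simply notes that $Y_{2n}=\sum_{k=1}^{2n}\nu_k$ is an additive functional of the finite-state ergodic chain $(\nu_k)$ and invokes the general local limit theorem for such functionals (Kolmogorov), which directly yields $\mathbb{P}_0(Y_{2n}=0)\sim C/\sqrt{n}$ for some $C>0$. Your approach is in effect a hands-on proof of that black-box theorem in the special $2\times2$ case: more work, but fully self-contained and yielding the explicit value of $C$ and of the asymptotic variance $\sigma^2=1/2$. The paper's citation buys brevity; your computation buys transparency and the exact constant, at the price of carrying out the spectral/Laplace analysis (and correctly accounting for the parity-induced second peak at $t=\pm\pi$, which you rightly flag as the only subtlety).
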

\begin{proof}
Since $Y_{2n}:=\sum_{k=1}^{2n} \nu_k$, the result follows by the local limit theorem for ergodic Markov chain with finite state space (see \cite{refkolmogorov}).
\end{proof}
\begin{lemma}\label{keylemma}[\cite{ref1}, lemma 2.3]
$(X_{\sigma_n})_{n\geq 0}$ is transient $\implies (M_n)_{n\geq 0} $ is transient, i.e.
\begin{equation*}
\sum_{n=0}^{\infty} \mathbb{P}_0(X_{\sigma_n}=0\mid\mathcal{F}\vee\mathcal{G})<\infty \text{ a.s } \implies \sum_{l=0}^{\infty} \mathbb{P}_0(M_l=(0,0)\mid\mathcal{G})<\infty \text{ a.s }.
\end{equation*}
\end{lemma}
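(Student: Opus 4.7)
The plan is to express the total occupation $\sum_l\mathbf{1}_{\{M_l=(0,0)\}}$ of $M$ at the origin as a sum of indicators indexed by the horizontal stretches of $M$ at height zero, and then to compare each such indicator to the event $\{X_{\sigma_n}=0\}$. At each time $T_{\sigma_n}$, the walk $M$ starts a maximal block of horizontal steps, all oriented in the direction $\epsilon_0$, beginning at $(X_{\sigma_n},0)$ and ending at $(X_{\sigma_n+1},0)$ just before the next vertical jump; since the abscissa is monotone along such a block, $(0,0)$ is visited at most once per stretch, and is actually visited iff $\epsilon_0 X_{\sigma_n}\leq 0$ and the geometric length $\xi^{(0)}_{n+1}$ of the block satisfies $\xi^{(0)}_{n+1}\geq |X_{\sigma_n}|$. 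Denoting this indicator by $U_n$ yields $\sum_l\mathbf{1}_{\{M_l=(0,0)\}}=\sum_n U_n$, and because $X_{\sigma_n}$ uses the $\xi^{(0)}_i$ only for $i\leq n$, the variable $\xi^{(0)}_{n+1}$ is independent of $X_{\sigma_n}$ conditionally on $\mathcal{F}\vee\mathcal{G}$; integrating the tail $\mathbb{P}(\xi\geq k)=2^{-k}$ against the conditional law of $X_{\sigma_n}$ then gives
\[ \mathbb{E}[U_n\mid\mathcal{F}\vee\mathcal{G}]=\mathbb{E}\bigl[2^{-|X_{\sigma_n}|}\mathbf{1}_{\{\epsilon_0 X_{\sigma_n}\leq 0\}}\,\big|\,\mathcal{F}\vee\mathcal{G}\bigr]. \]

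The core of the argument is then to dominate this right-hand side, up to a universal multiplicative constant, by $\mathbb{P}_0(X_{\sigma_n}=0\mid\mathcal{F}\vee\mathcal{G})$. Conditionally on $\mathcal{F}\vee\mathcal{G}$, the variable $X_{\sigma_n}=\sum_y\epsilon_y S_y$ is a sum of independent negative-binomial variables of total variance $2\sigma_n$, and $\sigma_n\to\infty$ a.s.\ by Lemma \ref{Y_n_behaviuour}. A local limit theorem of Gnedenko type then gives an a.s.\ Gaussian approximation of mass $\mathcal{O}(\sigma_n^{-1/2})$ for $\mathbb{P}_0(X_{\sigma_n}=x\mid\mathcal{F}\vee\mathcal{G})$, centred at the (random) conditional drift $\mu_n:=\sum_y\epsilon_y\eta_{\sigma_n-1}(y)$. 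A short Gaussian integration of the exponential weight $2^{-|x|}$ against this profile, using the sign constraint $\epsilon_0 x\leq 0$ to discard the ``wrong-side'' tail and completing the square in the exponent, yields a.s., for a constant $C$ independent of $n$,
\[ \mathbb{E}[U_n\mid\mathcal{F}\vee\mathcal{G}]\leq C\,\mathbb{P}_0(X_{\sigma_n}=0\mid\mathcal{F}\vee\mathcal{G}). \]
Summing and invoking the hypothesis, $\sum_n \mathbb{E}[U_n\mid\mathcal{F}\vee\mathcal{G}]<\infty$ a.s., so that $N:=\sum_n U_n$ is a.s.\ finite.

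To pass from this to the $\mathcal{G}$-conditional statement, we use that, given $\mathcal{G}$, the process $(M_l)_{l\geq 0}$ is a Markov chain on the deterministic oriented graph $\mathbf{H}_\epsilon$; the classical dichotomy for the return to a single state gives $\{N<\infty\text{ a.s.}\}\Leftrightarrow\{\mathbb{E}[N\mid\mathcal{G}]<\infty\text{ a.s.}\}$, and since $\mathbb{E}[N\mid\mathcal{G}]=\sum_l\mathbb{P}_0(M_l=(0,0)\mid\mathcal{G})$, the conclusion follows. I expect the main technical difficulty to lie in the local-CLT step above: the Gnedenko bound must hold on a set of full $\mathbb{P}$-measure, not merely in probability, and the sign constraint $\epsilon_0 X_{\sigma_n}\leq 0$ is indispensable, since otherwise the product of $2^{-|x|}$ with a Gaussian of mean $\mu_n$ can concentrate on the opposite side of zero and exceed $\mathbb{P}_0(X_{\sigma_n}=0\mid\mathcal{F}\vee\mathcal{G})$ by an arbitrarily large factor.
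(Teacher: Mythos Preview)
The paper does not give its own proof of this lemma; it simply cites \cite{ref1}, Lemma~2.3. So there is no ``paper's proof'' to compare against, and your task is really to produce a correct argument.

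Your decomposition $\sum_l\mathbf{1}_{\{M_l=(0,0)\}}=\sum_n U_n$ and the identity
\[
\mathbb{E}[U_n\mid\mathcal{F}\vee\mathcal{G}]
=\mathbb{E}\bigl[2^{-|X_{\sigma_n}|}\mathbf{1}_{\{\epsilon_0X_{\sigma_n}\le 0\}}\,\big|\,\mathcal{F}\vee\mathcal{G}\bigr]
\]
are correct, and the final passage from ``$\sum_nU_n<\infty$ a.s.'' to the $\mathcal{G}$-conditional statement via the Markov-chain dichotomy is fine. The gap is in the central step: the claimed inequality
\[
\mathbb{E}[U_n\mid\mathcal{F}\vee\mathcal{G}]\;\le\;C\,\mathbb{P}(X_{\sigma_n}=0\mid\mathcal{F}\vee\mathcal{G})
\]
with a \emph{universal} constant $C$ is false, and the local-CLT heuristic you invoke cannot rescue it. The Gnedenko bound controls $\mathbb{P}(X_{\sigma_n}=x\mid\mathcal{F}\vee\mathcal{G})$ only in a window of width $O(\sqrt{\sigma_n})$ around the conditional mean $\mu_n$; when $|\mu_n|$ is of order $\sigma_n$ rather than $\sqrt{\sigma_n}$, the point $x=0$ lies in the large-deviations regime, where the Gaussian profile grossly overestimates $\mathbb{P}(X_{\sigma_n}=0\mid\mathcal{F}\vee\mathcal{G})$ and your ``short Gaussian integration'' gives the wrong answer.

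Here is an explicit scenario on which your inequality fails. Take $n=1$, $\epsilon_0=+1$, and condition on a $Y$-path with $\sigma_1=m+1$ such that every level $y\neq 0$ visited during the first excursion satisfies $\epsilon_y=-1$. Then, conditionally on $\mathcal{F}\vee\mathcal{G}$, $X_{\sigma_1}=P-N$ with $P$ a single geometric and $N$ an independent sum of $m$ geometrics. A direct computation gives
\[
\mathbb{P}(X_{\sigma_1}=0\mid\mathcal{F}\vee\mathcal{G})=\tfrac12\,(2/3)^m,
\qquad
\mathbb{E}[U_1\mid\mathcal{F}\vee\mathcal{G}]=\tfrac12\,\mathbb{E}\bigl[(N+1)2^{-N}\bigr]=\tfrac12\bigl(\tfrac{m}{3}+1\bigr)(2/3)^m,
\]
so the ratio equals $m/3+1$ and is unbounded in $m$. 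This event has positive $\mathbb{P}$-probability for every $m$, so no almost-sure constant can absorb it. The sign constraint $\epsilon_0X_{\sigma_1}\le 0$ is already active here and does not help: the drift $\mu_1\approx-m$ pushes mass to the correct side, but the exponential weight $2^{-|x|}$ decays more slowly than the true (large-deviations) density at $0$.

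In short, the local-CLT comparison is not the right tool here; the termwise domination you seek simply does not hold. A correct proof along the lines of \cite{ref1} must compare $\sum_n\mathbb{E}[U_n\mid\mathcal{F}\vee\mathcal{G}]$ to the hypothesis by a different mechanism than a pointwise-in-$n$ bound.
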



\subsection{Characteristic function of the embedded random walk}
Let $n\in\mathbb{N}, y\in\mathbb{Z}$ and define $$m_{n,o}(y):=\sum_{k=0}^n\mathbf{1}_{\{Y_k=y, \nu_k=\nu_{k+1}\}},$$ $$m_{n,e}(y):=\sum_{k=0}^n\mathbf{1}_{\{Y_k=y, \nu_k\ne\nu_{k+1}\}}.$$ 
They satisfy $m_{n,o}(y)+m_{n,e}(y)=\eta_{n}(y).$ 
So we can decompose the embedded random walk $X_n$ as follows
$$X_n=\sum_{y\in\mathbb{Z}}\epsilon_y\left(\sum_{i=1}^{m_{n-1,o}(y)}\xi_{i,o}^{(y)}+\sum_{i=1}^{m_{n-1,e}(y)}\xi_{i,e}^{(y)}\right),$$
where $\xi_{i,o}^{(y)}$ and $\xi_{i,e}^{(y)}$ are two independent families of i.i.d. random variables having, respectively, the law of a geometric random variable taking only odd integer values and only null or even integer values; precisely, it is easy to see that
$\mathbb{P}(\xi_{i,o}^{(y)}=2k+1)=\mathbb{P}(\xi_{i,e}^{(y)}=2k)=3\left(\frac{1}{2}\right)^{2k+2}
$  for every $k\in\mathbb{N}.$ In the present work, we shall say that a random variable is \textit{even geometric} if it has the same law of $\xi_{1,e}^{(0)}$, and \textit{odd geometric} if it has the same law of $(\xi_{1,o}^{(0)})$.
Their characteristic function are, respectively, $$\chi_o(\theta):=\mathbb{E}(\exp(i\theta \xi_{i,o}^{(y)}))=\frac{3e^{i\theta}}{4-e^{2i\theta}}$$
and
$
\chi_e(\theta):=\mathbb{E}(\exp(i\theta \xi_{i,e}^{(y)}))=e^{-i\theta}\chi_o(\theta).$
Observe that $r_e(\theta):=|\chi_e(\theta)|=|e^{-i\theta}\chi_o(\theta)|=|\chi_o(\theta)|=r_o(\theta)$. Moreover, note that $r_o(\theta)$ is an even function.

\begin{lemma}\label{genfunction1}
The characteristic function of $X_n$ is
$$\mathbb{E}(\exp(i\theta X_n))=\mathbb{E}\left(\prod_{y\in \mathbb{Z}}\chi_o(\theta\epsilon_y)^{m_{n-1,o}(y)}\chi_e(\theta\epsilon_y)^{m_{n-1,e}(y)}\right).$$
\end{lemma}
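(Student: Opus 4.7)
The plan is to compute $\mathbb{E}(e^{i\theta X_n})$ by a conditioning argument: condition on the sigma-algebra $\mathcal{F} \vee \mathcal{G}$, which jointly encodes the vertical skeleton (and hence the occupation counts $m_{n-1,o}(y)$ and $m_{n-1,e}(y)$) and the orientations $(\epsilon_y)_{y\in\mathbb{Z}}$. Under this conditioning, the exponents in the product become deterministic, the signs $\epsilon_y \in \{-1,+1\}$ are fixed, and only the geometric variables $\xi_{i,o}^{(y)}$ and $\xi_{i,e}^{(y)}$ remain random. By construction the families $(\xi_{i,o}^{(y)})$ and $(\xi_{i,e}^{(y)})$ are independent i.i.d. collections and are independent of $\mathcal{F} \vee \mathcal{G}$.

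First I would verify the measurability claim. The counts $m_{n-1,o}(y)$ and $m_{n-1,e}(y)$ depend on $(\nu_k)_{k \le n}$ (since they reference $\nu_{k+1}$ for $k \le n-1$) and thus belong to $\mathcal{F}_n \subset \mathcal{F}$; of course $\epsilon_y \in \mathcal{G}$. Next, inside the conditional expectation, split the sum over $y$ and the separate contributions from odd and even geometric variables into independent blocks, so the exponential factors and their conditional expectation becomes a product:
\begin{align*}
\mathbb{E}\bigl(e^{i\theta X_n}\mid \mathcal{F}\vee\mathcal{G}\bigr)
= \prod_{y\in\mathbb{Z}} \mathbb{E}\!\left(e^{i\theta\epsilon_y\sum_{i=1}^{m_{n-1,o}(y)}\xi_{i,o}^{(y)}}\,\bigg|\,\mathcal{F}\vee\mathcal{G}\right)\, \mathbb{E}\!\left(e^{i\theta\epsilon_y\sum_{i=1}^{m_{n-1,e}(y)}\xi_{i,e}^{(y)}}\,\bigg|\,\mathcal{F}\vee\mathcal{G}\right).
\end{align*}

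Each conditional factor is then a finite product of i.i.d. characteristic functions evaluated at $\theta\epsilon_y$: using the definitions of $\chi_o$ and $\chi_e$,
\begin{equation*}
\mathbb{E}\!\left(e^{i\theta\epsilon_y\sum_{i=1}^{m_{n-1,o}(y)}\xi_{i,o}^{(y)}}\,\bigg|\,\mathcal{F}\vee\mathcal{G}\right) = \chi_o(\theta\epsilon_y)^{m_{n-1,o}(y)},
\end{equation*}
and analogously for the even block with $\chi_e$. The claimed identity follows by taking the outer expectation and invoking the tower property.

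I do not expect a serious obstacle here; the statement is essentially a bookkeeping identity. The only mild point requiring care is the interchange of the infinite product over $y\in\mathbb{Z}$ with the conditional expectation, but since only finitely many sites are visited by time $n$ (so that $m_{n-1,o}(y)=m_{n-1,e}(y)=0$ for all but finitely many $y$, and the corresponding factors equal $1$), the product is in fact finite almost surely and the factorisation via independence is immediate.
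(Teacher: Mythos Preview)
Your proposal is correct and follows essentially the same route as the paper: condition on the skeleton and the environment, factor the exponential over sites and over the odd/even geometric families by independence, identify each factor with the appropriate power of $\chi_o$ or $\chi_e$, and take the outer expectation. The paper conditions on $\mathcal{F}_n\vee\mathcal{G}$ rather than $\mathcal{F}\vee\mathcal{G}$, but this is immaterial since the exponents are $\mathcal{F}_n$-measurable and the $\xi$'s are independent of all of $\mathcal{F}$; your additional remarks on measurability and on the almost-sure finiteness of the product are welcome clarifications that the paper leaves implicit.
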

\begin{proof}
We have
\begin{align*}
\mathbb{E}(\exp(i\theta X_n))=&\mathbb{E}\left(\mathbb{E}\left(\exp(i\theta \sum_{y\in\mathbb{Z}}\epsilon_y\sum_{i=1}^{\eta_{n-1}(y)}\xi_i^{(y)})\mid \mathcal{F}_n\vee\mathcal{G}\right)\right)\\
=& \mathbb{E}\left(\mathbb{E}\left(\prod_{y\in \mathbb{Z}}\exp(i\theta \epsilon_y\sum_{i=1}^{\eta_{n-1}(y)}\xi_i^{(y)})\mid \mathcal{F}_n\vee\mathcal{G}\right)\right)\\
=& \mathbb{E}\left(\mathbb{E}\left(\prod_{y\in \mathbb{Z}}\exp(i\theta \epsilon_y(\sum_{i=1}^{m_{n-1,o}^{(y)}}\xi_{i,o}^{(y)}+\sum_{i=1}^{m_{n-1,e}^{(y)}}\xi_{i,e}^{(y)}))\mid \mathcal{F}_n\vee\mathcal{G}\right)\right)\\
=&\mathbb{E}\left(\prod_{y\in \mathbb{Z}}\chi_o(\theta\epsilon_y)^{m_{n-1,o}^{(y)}}\chi_e(\theta\epsilon_y)^{m_{n-1,e}^{(y)}}\right)
\end{align*}
\end{proof}
\section{Proofs}
\subsection{The random walk on the $\mathbf{H}_{\epsilon}$ lattice}
This section is devoted to the proof of theorem \ref{Th:: RANDOM}.
Let $m_o$ and $m_e$ be, respectively, the mean of an odd geometric and of an even geometric random variable. Define $\hat\eta_{2n-1}(y)=m_o m_{2n-1, o}^{(y)} + m_e m_{2n-1, e}^{(y)} .$
As in \cite{ref1} we define, for $n\geq 0$, the following families of events:
\begin{align*}
& A_{n,1}:=\{\max_{0\leq k\leq 2n}|Y_k|<n^{\frac{1}{2} + \delta_1}\}, \,\,\delta_1>0 \\
& A_{n,2}:=\{\max_{y\in\mathbb{Z}}\eta_{2n-1}(y)<n^{\frac{1}{2} + \delta_2}\}, \,\,\delta_2>0 \\
& A_n:=A_{n,1}\cap A_{n,2} \\
& B_n:=A_n \cap \{|\sum_{y\in\mathbb{Z}}\epsilon_y \hat \eta_{2n-1}(y)|>n^{\frac{1}{2} + \delta_3}\}, \,\,\delta_3>0
\end{align*}
where $\delta_1$, $\delta_2$ and $\delta_3$ will be chosen later. Observe that, for every $n$, $A_n\in\mathcal{F}_{2n}$ and $B_n\subset A_n, \,\,B_n\in\mathcal{F}_{2n}\vee\mathcal{G}$.
Thus, we have
$p_n=p_{n,1}+p_{n,2}+p_{n,3},$
where
\begin{align*}
& p_n=\mathbb{P}(X_{2n}=0, Y_{2n}=0)\\
& p_{n,1}=\mathbb{P}(X_{2n}=0, Y_{2n}=0, B_n)\\
& p_{n,2}=\mathbb{P}(X_{2n}=0, Y_{2n}=0, A_n\slash B_n)\\
& p_{n,3}=\mathbb{P}(X_{2n}=0, Y_{2n}=0, A_n^c).
\end{align*}
In order to prove transience, we will provide estimates of -respectively- $p_{n,1}$, $p_{n,2}$ and $p_{n,3}$, from which we will deduce that $\sum_{n\geq 0} p_{n}$ is convergent. Then the result will follow at once thanks to the following lemma.
\begin{lemma}
If $\sum_{n\geq 0} p_{n}<\infty$, then $(M_n)_{n\geq 0}$ is transient.
\end{lemma}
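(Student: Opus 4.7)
The plan is to invoke Lemma~\ref{keylemma} after a short first-moment computation. By that lemma, it suffices to show that
\[
S := \sum_{n\geq 0}\mathbb{P}_0(X_{\sigma_n}=0\mid \mathcal{F}\vee\mathcal{G}) < \infty \quad \text{a.s.}
\]
Since $S$ is a non-negative random variable, it is enough to verify $\mathbb{E}[S]<\infty$. By Tonelli one has $\mathbb{E}[S] = \sum_{n\geq 0}\mathbb{P}_0(X_{\sigma_n}=0)$, and I will identify this as the expected number of visits of the embedded process $(X_k,Y_k)$ to the origin.

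By Lemma~\ref{Y_n_behaviuour} each $\sigma_n$ is a.s.\ finite, and by construction the sequence $(\sigma_n)_{n\geq 0}$ enumerates exactly the times at which $(Y_k)$ returns to $0$. Hence
\[
\sum_{n\geq 0}\mathbf{1}_{\{X_{\sigma_n}=0\}} \;=\; \#\{k\geq 0 : X_k=0,\ Y_k=0\}\quad \text{a.s.}
\]
Taking expectation, and noting that $Y_k=\sum_{i=1}^k\nu_i$ with $\nu_i\in\{-1,+1\}$ forces $Y_k=0$ only at even times $k=2n$,
\[
\mathbb{E}[S] \;=\; \sum_{k\geq 0}\mathbb{P}_0(X_k=0, Y_k=0) \;=\; \sum_{n\geq 0}p_n,
\]
which is finite by hypothesis. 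Therefore $S<\infty$ a.s., and Lemma~\ref{keylemma} yields transience of $(M_n)$.

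I anticipate no real obstacle in this step. All the analytic content sits in Lemma~\ref{keylemma} (which converts the conditional criterion on the embedded walk into a statement about $(M_n)$) and in the estimates of $p_{n,1},p_{n,2},p_{n,3}$ used elsewhere to establish $\sum_n p_n<\infty$. Only two small checks are needed here: the Tonelli interchange (immediate by nonnegativity) and the observation that $(\sigma_n)_{n\geq 0}$ exhausts the visits of $(Y_k)$ to $0$, which follows from the definition of $\sigma_n$ as the successive return times together with the a.s.\ recurrence of $(Y_k)$ provided by Lemma~\ref{Y_n_behaviuour}.
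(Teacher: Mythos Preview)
Your argument is correct and follows essentially the same route as the paper: take expectations (Tonelli) to reduce to $\sum_n \mathbb{P}(X_{\sigma_n}=0)$, bound this by $\sum_n p_n$, and then invoke Lemma~\ref{keylemma}. The only cosmetic difference is that you establish the equality $\sum_n \mathbb{P}(X_{\sigma_n}=0)=\sum_n p_n$ via the counting identity, whereas the paper is content with the inequality $\leq$, which is all that is needed.
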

\begin{proof}
From the trivial majorization $$\sum_{n\geq 0}\mathbb{P}(X_{\sigma_n}=0)=\sum_{n\geq 0} \mathbb{P}(X_{\sigma_n}=0, Y_{\sigma_n}=0) \leq \sum_{n\geq 0} \mathbb{P}(X_{2n}=0, Y_{2n}=0),$$
we deduce that $\sum_{n\geq 0} \mathbb{P}(X_{\sigma_n}=0)<\infty$ and hence also $\sum_{n\geq 0}\mathbb{P}(X_{\sigma_n}=0 \mid \mathcal{F}_n\vee\mathcal{G})<\infty \text{ a.s.}$
By lemma \ref{keylemma}, this implies the a.s. transience of $(M_n)_{n\geq 0}$.
\end{proof}

\subsubsection{Estimate of $p_{n,1}$} 
Define $$N_{o}^+:=\sum_{k=1}^{2n}\mathbf{1}_{\{\epsilon_{Y_k}=1\}}\mathbf{1}_{\{\nu_k=\nu_{k+1}\}},$$
$$N_{e}^+:=\sum_{k=1}^{2n}\mathbf{1}_{\{\epsilon_{Y_k}=1\}}\mathbf{1}_{\{\nu_k\ne\nu_{k+1}\}},$$ $$N_o^-:=\sum_{k=1}^{2n}\mathbf{1}_{\{\epsilon_{Y_k}=-1\}}\mathbf{1}_{\{\nu_k=\nu_{k+1}\}},$$ 
$$N_{e}^-:=\sum_{k=1}^{2n}\mathbf{1}_{\{\epsilon_{Y_k}=-1\}}\mathbf{1}_{\{\nu_k\ne\nu_{k+1}\}},$$
and
$$\Delta_{n,o}:=N_o^+-N_o^-,$$
$$\Delta_{n,e}:=N_{e}^+-N_{e}^-,$$
$$\Sigma_{n,o}:=N_o^++N_o^-,$$
$$\Sigma_{n,e}:=N_{e}^++N_{e}^-,$$
Observe that
$$m_o\Delta_{n,o}+m_e\Delta_{n,e}=\sum_{y\in\mathbb{Z}} \epsilon_y\hat \eta_{2n-1}(y)$$ and $\Sigma_{n,o}+\Sigma_{n,e} =2n.$ Let $(\xi_{k,o})_{k\geq 1}$ and $(\xi_{k,e})_{k\geq 1}$ be two families of, respectively, odd geometric and even geometric independent random variables (the families are independent also to each other).
Moreover let $\xi_o$ and $\xi_e$ be respectively a odd geometric and a even geometric random variable, and define  $s_o^2=\sigma^2(\xi_o)$, $s_e=\sigma^2(\xi_e)$.

\begin{lemma}\label{CharX_nExpansion}
We have
$$\mathbb{E}(\exp(t X_{2n})\mid\mathcal{F}_{2n}\vee \mathcal{G})=\exp\left(t(m_o\Delta_{n,o} + m_{e}\Delta_{n,e}) +\frac{t^2}{2}\left(s_o^2\Sigma_{n,o} +  s_{e}^2\Sigma_{n,e}\right)+\mathcal{O}(t^3 n)\right).$$
\end{lemma}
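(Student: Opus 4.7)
The plan is to apply (the moment generating function analogue of) Lemma \ref{genfunction1} with $\theta$ replaced by $-it$, i.e.\ to work with $\psi_o(t):=\mathbb{E}(e^{t\xi_o})$ and $\psi_e(t):=\mathbb{E}(e^{t\xi_e})$ in place of $\chi_o,\chi_e$. Since the odd/even geometric laws have exponential tails controlled by $1/2$, both MGFs are finite and analytic on some neighborhood $|t|\le\rho$ of the origin, so we may restrict to this range. The same conditioning argument that produced Lemma \ref{genfunction1} gives
\begin{equation*}
\mathbb{E}(\exp(tX_{2n})\mid\mathcal{F}_{2n}\vee\mathcal{G})=\prod_{y\in\mathbb{Z}}\psi_o(t\epsilon_y)^{m_{2n-1,o}(y)}\,\psi_e(t\epsilon_y)^{m_{2n-1,e}(y)}.
\end{equation*}

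Next I would take logarithms and expand the two cumulant generating functions to second order,
\begin{equation*}
\log\psi_o(s)=m_o s+\tfrac{1}{2}s_o^2 s^2+R_o(s),\qquad \log\psi_e(s)=m_e s+\tfrac{1}{2}s_e^2 s^2+R_e(s),
\end{equation*}
where analyticity yields a uniform cubic bound $|R_o(s)|,|R_e(s)|\le C|s|^3$ for $|s|\le\rho$. Substituting $s=t\epsilon_y$ and using $\epsilon_y^2=1$, the quadratic contribution becomes independent of the signs, while the linear contribution keeps $\epsilon_y$. Summing against the occupation counts and invoking the identities that follow directly from the definitions of $m_{\cdot,o},m_{\cdot,e}$ and of the $\Delta$'s and $\Sigma$'s, namely
\begin{equation*}
\sum_{y\in\mathbb{Z}}\epsilon_y m_{2n-1,o}(y)=\Delta_{n,o},\quad \sum_{y\in\mathbb{Z}} m_{2n-1,o}(y)=\Sigma_{n,o},
\end{equation*}
(and analogously for the subscript $e$), one reads off the two leading terms $t(m_o\Delta_{n,o}+m_e\Delta_{n,e})$ and $\tfrac{t^2}{2}(s_o^2\Sigma_{n,o}+s_e^2\Sigma_{n,e})$ of the claimed exponent.

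The only nontrivial step is controlling the remainder
\begin{equation*}
\mathcal{R}_n(t):=\sum_{y\in\mathbb{Z}}\bigl\{m_{2n-1,o}(y)R_o(t\epsilon_y)+m_{2n-1,e}(y)R_e(t\epsilon_y)\bigr\}.
\end{equation*}
Since $|\epsilon_y|=1$, the uniform cubic bound gives $|R_o(t\epsilon_y)|,|R_e(t\epsilon_y)|\le C|t|^3$, and combined with the conservation $\sum_y(m_{2n-1,o}(y)+m_{2n-1,e}(y))=\Sigma_{n,o}+\Sigma_{n,e}=2n$ one obtains $|\mathcal{R}_n(t)|\le 2Cn|t|^3$, which is $\mathcal{O}(t^3 n)$ uniformly on $\mathcal{F}_{2n}\vee\mathcal{G}$. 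I expect the only real subtlety to be locking down the radius $\rho$ on which the expansions are valid and the cubic remainder bound holds, but because $\xi_o,\xi_e$ are bounded in the exponential sense this is routine and the same $\rho$ works for all $n$.
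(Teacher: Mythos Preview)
Your proposal is correct and follows essentially the same route as the paper: factor the conditional moment generating function into powers of the odd/even geometric MGFs, expand each MGF to second order in $t$ with a cubic remainder, and collect terms. The paper organizes the product via $\phi_o(\pm t)^{N_o^\pm}\phi_e(\pm t)^{N_e^\pm}$ rather than your product over $y$ with argument $t\epsilon_y$, but these are identical once one groups by the sign of $\epsilon_y$; your handling of the remainder via $\sum_y(m_{2n-1,o}(y)+m_{2n-1,e}(y))=2n$ is exactly how the $\mathcal{O}(t^3 n)$ arises.
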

\begin{proof}
Consider the generating function $\phi_o(t)=\mathbb{E}(\exp(t\xi_o))$, defined in $t\in]-\infty,\ln2[$, the largest domain in which $\phi_o(t)<\infty$. 
We have
\begin{align*}
\phi_o(t)=\sum_{k\geq 0} \mathbb{P}(\xi_o=k)e^{tk}=& \sum_{k\geq 0} \mathbb{P}(\xi_o=k)\left(1+kt+\frac{(kt)^2}{2}+\mathcal{O}(t^3)\right)\\
=& 1+\mathbb{E}(t\xi_o)+\frac{1}{2}\mathbb{E}\left(\left(t\xi_o\right)^2\right)+\mathcal{O}(t^3)\\
=& \exp\left(tm_o+t^2\frac{s_o^2}{2}+\mathcal{O}(t^3)\right)
\end{align*}
Analogously, we define $\phi_{e}(t)$ to be the generating function of $\xi_e$, and observe that its generating function behaves as
$\phi_{e}(t)=\exp\left(tm_{e}+t^2\frac{s_{e}^2}{2}+\mathcal{O}(t^3)\right).$
Note that also $\phi_{e}(t)$ is finite if and only if $t\in]-\infty,\ln2[$.
Finally, putting all together, we have  
\begin{align*}
\mathbb{E}(\exp(tX_{2n})\mid \mathcal{F}_{2n}\vee\mathcal{G})
=&\mathbb{E}\left(\exp(t\sum_{k=1}^{2n}\mathbf{1}_{\{\epsilon_{Y_k}=1\}}\xi_k -t\sum_{k=1}^{2n}\mathbf{1}_{\{\epsilon_{Y_k}=-1\}}\xi_k)\mid \mathcal{F}_{2n}\vee\mathcal{G}\right)\\
=&\phi_o(t)^{N_{o}^+}\phi_{e}(t)^{N_{e}^+}\phi_o(-t)^{N_{o}^-} \phi_{e}(-t)^{N_{e}^-}\\
=& \exp\left(t(m_o\Delta_{n,o} + m_{e}\Delta_{n,e}) +\frac{t^2}{2}(s_o^2\Sigma_{n,o} +  s_{e}^2\Sigma_{n,e})+\mathcal{O}(t^3 n)\right).
\end{align*}
\end{proof}
\begin{proposition}\label{estimate-pn1}
For large $n$, on the set $B_n$, we have
$$\mathbb{P}(X_{2n}=0\mid \mathcal{F}_{2n} \vee \mathcal{G})=\mathcal{O}(\exp{(-n^{\delta'})})$$
for any $\delta'\in ]0, 2\delta_3[$.
\end{proposition}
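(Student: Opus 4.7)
The plan is to apply a conditional Chernoff (exponential Markov) bound based on the MGF expansion provided by Lemma \ref{CharX_nExpansion}. Set $\mu:=m_o\Delta_{n,o}+m_e\Delta_{n,e}$ and $\sigma^2:=s_o^2\Sigma_{n,o}+s_e^2\Sigma_{n,e}$: these play the role of the conditional mean and the conditional variance proxy of $X_{2n}$ given $\mathcal{F}_{2n}\vee\mathcal{G}$. Because $\Sigma_{n,o}+\Sigma_{n,e}=2n$, one has the deterministic bound $\sigma^2\le Cn$ for an absolute constant $C$, while on $B_n$ one has $|\mu|>n^{1/2+\delta_3}$ by the very definition of the event. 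Thus $X_{2n}$ is conditionally centered at a value whose modulus is much larger than its typical fluctuations of order $\sqrt n$, which is what will force $\mathbb{P}(X_{2n}=0\mid\mathcal{F}_{2n}\vee\mathcal{G})$ to be exponentially small.

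Concretely, assume $\mu>0$ on $B_n$; the case $\mu<0$ is symmetric upon replacing $t$ by $-t$ below. For any $t>0$ sufficiently small, Markov's inequality applied to $e^{-tX_{2n}}$, combined with Lemma \ref{CharX_nExpansion}, yields
$$\mathbb{P}(X_{2n}=0\mid\mathcal{F}_{2n}\vee\mathcal{G})\le\mathbb{E}(e^{-tX_{2n}}\mid\mathcal{F}_{2n}\vee\mathcal{G})=\exp\!\Bigl(-t\mu+\tfrac{t^2}{2}\sigma^2+\mathcal{O}(t^3n)\Bigr).$$
Given $\delta'\in\,]0,2\delta_3[$, I would pick $\gamma\in\,]0,\delta_3[$ with $\gamma+\delta_3>\delta'$ (such $\gamma$ exists precisely because $\delta'<2\delta_3$) and set $t:=n^{-1/2+\gamma}$. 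On $B_n$, the three summands in the exponent then satisfy
$$-t\mu\le-n^{\gamma+\delta_3},\qquad \tfrac{t^2}{2}\sigma^2\le \tfrac{C}{2}n^{2\gamma},\qquad \mathcal{O}(t^3n)=\mathcal{O}(n^{3\gamma-1/2}).$$
Since $\gamma<\delta_3$ (and, assuming as one may that $\delta_3<1/2$, also $3\gamma-1/2<\gamma+\delta_3$), both error terms are of strictly smaller order than $n^{\gamma+\delta_3}$, so that for all large $n$ the exponent is dominated by the mean contribution and bounded above by $-\tfrac12n^{\gamma+\delta_3}\le-n^{\delta'}$.

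The main point that requires care is the uniform control of the $\mathcal{O}(t^3n)$ remainder in the MGF expansion: it comes from Taylor-expanding $\log\phi_o(t)$ and $\log\phi_e(t)$ near $0$, so its implicit constant is deterministic provided $t$ stays bounded away from the singularity $\ln 2$ of $\phi_o$ and $\phi_e$. Since $t=n^{-1/2+\gamma}\to 0$, this is automatic for large $n$, and the whole chain of inequalities uses $B_n$ only through the deterministic bounds $|\mu|>n^{1/2+\delta_3}$ and $\sigma^2\le Cn$, which keeps the estimate genuinely conditional on $\mathcal{F}_{2n}\vee\mathcal{G}$.
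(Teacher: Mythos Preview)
Your proof is correct and follows essentially the same Chernoff-bound strategy as the paper: apply Markov's inequality to $e^{\mp tX_{2n}}$, plug in the MGF expansion of Lemma~\ref{CharX_nExpansion}, and choose $t$ of order $n^{-1/2+\gamma}$. The only cosmetic difference is that the paper takes the ``optimal'' choice $t=\pm\frac{n^{\delta_3-1/2}}{2s^2}$ (completing the square, which makes the linear and quadratic terms of the same order $n^{2\delta_3}$), whereas you take $\gamma$ strictly below $\delta_3$ so that the quadratic and cubic terms are of genuinely lower order than the linear one; both choices give the stated $\mathcal{O}(\exp(-n^{\delta'}))$ for every $\delta'<2\delta_3$.
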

\begin{proof}
Using Markov inequality and lemma \ref{CharX_nExpansion}, we have for $t<0,$
\begin{align*}
\mathbb{P}(X_{2n}=0 \mid \mathcal{F}_{2n} \vee \mathcal{G})\leq & \mathbb{P}(X_{2n}\leq 0 \mid \mathcal{F}_{2n} \vee \mathcal{G})\\
= & \mathbb{P}(tX_{2n}\geq 0 \mid \mathcal{F}_{2n} \vee \mathcal{G})\\
= & \mathbb{P}(\exp(tX_{2n})\geq 1 \mid \mathcal{F}_{2n} \vee \mathcal{G})\\
\leq& \mathbb{E}(\exp(tX_{2n})\mid \mathcal{F}_{2n} \vee \mathcal{G})
\end{align*}
For $0<t<\ln 2$, we obtain analogously the same bound
\begin{align*}
\mathbb{P}(X_{2n}=0 \mid \mathcal{F}_{2n} \vee \mathcal{G})\leq & \mathbb{P}(X_{2n}\geq 0 \mid \mathcal{F}_{2n} \vee \mathcal{G})
\leq \mathbb{E}(\exp(tX_{2n})\mid \mathcal{F}_{2n} \vee \mathcal{G})\\
= & \exp\left(t(m_o\Delta_{n,o} + m_{e}\Delta_{n,e}) +\frac{t^2}{2}\left(s_o^2\Sigma_{n,1} +  s_{e}^2\Sigma_{n,-1}\right)+\mathcal{O}(t^3 n)\right)\\
\leq&\exp(t(m_o\Delta_{n,o} + m_{e}\Delta_{n,e})
+t^2s^2n+O(t^3 n)),
\end{align*}
where $s:=\max\{s_o,s_{e}\}$. Then, for the case $m_o\Delta_{n,o} + m_{e}\Delta_{n,e}>n^{\frac{1}{2}+\delta_3}$, we choose $t=-\frac{n^{\delta_3 -\frac{1}{2}}}{2s^2}$ and get
\begin{align*}
\mathbb{P}(X_{2n}=0 \mid \mathcal{F}_{2n} \vee \mathcal{G})\leq & \exp\left(-\frac{n^{2\delta_3} }{2s^2}+ \frac{n^{2\delta_3-1}s^2n}{4s^4}+\mathcal{O}((n^{\delta_3 -\frac{1}{2}})^3 n)\right)\\
\leq & \exp\left(-\frac{n^{2\delta_3}}{4s^2}+\mathcal{O}(n^{3\delta_3 -\frac{1}{2}})\right)
\end{align*}

Finally, for the case $m_o\Delta_{n,o} + m_{e}\Delta_{n,e}<-n^{\frac{1}{2}+\delta_3}$ we choose $t=\frac{n^{\delta_3 -\frac{1}{2}}}{2s^2}$ and get exactly the same bound.
\end{proof}

\begin{corollary}
$$\sum_{n\in \mathbb{N}} p_{n,1}<\infty.$$
\end{corollary}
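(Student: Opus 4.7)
The plan is to observe that Proposition \ref{estimate-pn1} already does all the heavy lifting; the corollary should follow by an elementary conditioning argument. First I would rewrite
$$p_{n,1}=\mathbb{E}\!\left[\mathbf{1}_{B_n}\mathbf{1}_{\{Y_{2n}=0\}}\,\mathbb{P}(X_{2n}=0\mid\mathcal{F}_{2n}\vee\mathcal{G})\right],$$
which is legitimate because $B_n\cap\{Y_{2n}=0\}\in\mathcal{F}_{2n}\vee\mathcal{G}$ (the inclusion $B_n\in\mathcal{F}_{2n}\vee\mathcal{G}$ was recorded when $B_n$ was defined, and $\{Y_{2n}=0\}\in\mathcal{F}_{2n}$), while the residual randomness making $\{X_{2n}=0\}$ nontrivial comes from the geometric family $(\xi_i^{(y)})$, which is independent of $\mathcal{F}_{2n}\vee\mathcal{G}$.

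Next, on the event $B_n$, Proposition \ref{estimate-pn1} supplies a \emph{uniform} bound $\mathbb{P}(X_{2n}=0\mid\mathcal{F}_{2n}\vee\mathcal{G})=\mathcal{O}(\exp(-n^{\delta'}))$ for any fixed $\delta'\in ]0,2\delta_3[$. Plugging this estimate into the expression above and bounding the two indicators trivially by $1$ gives
$$p_{n,1}=\mathcal{O}(\exp(-n^{\delta'})).$$
Since $\sum_{n\geq 1}\exp(-n^{\delta'})<\infty$ for every $\delta'>0$, summability of $(p_{n,1})$ follows at once.

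There is essentially no obstacle specific to this corollary: the hard analytic work has been discharged in Proposition \ref{estimate-pn1}, whose proof combined the moment-generating expansion of Lemma \ref{CharX_nExpansion} with an optimized Chernoff bound at $t$ of order $n^{\delta_3-1/2}$. The only point meriting a line of verification is the measurability claim used to pull the indicator of $B_n\cap\{Y_{2n}=0\}$ outside the conditional probability, and this has already been noted in the paragraph defining $B_n$. Any remaining delicacy is upstream, in the eventual choice of the exponents $\delta_1,\delta_2,\delta_3$ needed to make the estimates for $p_{n,2}$ and $p_{n,3}$ compatible.
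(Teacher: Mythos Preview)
Your argument is correct and is essentially the same as the paper's: both pull the $\mathcal{F}_{2n}\vee\mathcal{G}$-measurable indicator $\mathbf{1}_{B_n}$ (and $\mathbf{1}_{\{Y_{2n}=0\}}$) out of the conditional probability, apply Proposition~\ref{estimate-pn1} on $B_n$, and then take expectations to obtain $p_{n,1}=\mathcal{O}(\exp(-n^{\delta'}))$. Your version is in fact slightly more explicit about the measurability step than the paper's.
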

\begin{proof}
Observe that
\begin{equation}\label{eq::001}
\mathbb{P}(X_{2n}=0, Y_{2n}=0, B_n \mid \mathcal{F}_{2n}\vee \mathcal{G})\leq \mathbf{1}_{B_n}\mathbb{P}(X_{2n}=0\mid \mathcal{F}_{2n}\vee\mathcal{G}).
\end{equation}
In proposition \ref{estimate-pn1} we proved that, on $B_n$, $\mathbb{P}(X_{2n}=0\mid \mathcal{F}_{2n} \vee \mathcal{G})=\mathcal{O}(\exp{(-n^{\delta'})})$ for $\delta^{'}\in ]0,2\delta_3[.$ Thus, taking expectations on both sides of (\ref{eq::001}) we obtain
$$p_{n,1}\leq \mathbb{E}(\mathcal{O}(\exp{(-n^{\delta'})})\mathbf{1}_{B_n})= \mathcal{O}(\exp{(-n^{\delta'})})\mathbb{E}(\mathbf{1}_{B_n})\leq \mathcal{O}(\exp{(-n^{\delta'})}).$$
Thus, $p_{n,1}$ is summable.
\end{proof}

\subsubsection{Estimate of $p_{n,2}$}
\begin{lemma}\label{Xn-lnn/n}
We have
$$\mathbb{P}(X_{2n}=0 \mid \mathcal{F}_{2n} \vee \mathcal{G})=\mathcal{O}\left(\frac{1}{\sqrt{n}}\right).$$

\end{lemma}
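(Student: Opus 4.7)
The plan is to derive a deterministic (i.e., environment-free) upper bound on the modulus of the conditional characteristic function of $X_{2n}$ and then apply Fourier inversion. Since $X_{2n}$ is integer valued, on a set of full measure
\[
\mathbb{P}(X_{2n}=0 \mid \mathcal{F}_{2n}\vee\mathcal{G}) \;=\; \frac{1}{2\pi}\int_{-\pi}^{\pi}\mathbb{E}(e^{i\theta X_{2n}}\mid \mathcal{F}_{2n}\vee\mathcal{G})\,d\theta,
\]
so it suffices to control this integral uniformly in the conditioning.

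The key step is the conditional version of Lemma~\ref{genfunction1}: taking moduli inside the product and using the triangle inequality,
\[
\bigl|\mathbb{E}(e^{i\theta X_{2n}}\mid \mathcal{F}_{2n}\vee\mathcal{G})\bigr| \;\leq\; \prod_{y\in\mathbb{Z}} r_o(\theta\epsilon_y)^{m_{2n-1,o}(y)}\,r_e(\theta\epsilon_y)^{m_{2n-1,e}(y)}.
\]
Exploiting the identities already recorded in the paper, namely $r_e\equiv r_o$ and $r_o$ even, the dependence on $\epsilon_y$ disappears. Using $m_{n,o}(y)+m_{n,e}(y)=\eta_n(y)$ and $\sum_{y}\eta_{2n-1}(y)=2n$, the product collapses to the purely deterministic bound
\[
\bigl|\mathbb{E}(e^{i\theta X_{2n}}\mid \mathcal{F}_{2n}\vee\mathcal{G})\bigr| \;\leq\; r_o(\theta)^{2n}.
\]

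It remains to show $\int_{-\pi}^{\pi} r_o(\theta)^{2n}\,d\theta = \mathcal{O}(1/\sqrt n)$. A direct computation gives $r_o(\theta)^2 = 9/(17-8\cos 2\theta)$, so $r_o$ has period $\pi$, attains its maximum value $1$ exactly at $\theta\in\{0,\pm\pi\}$, and admits the expansion $r_o(\theta) = 1 - \tfrac{8}{9}\theta^2 + \mathcal{O}(\theta^4)$ near $0$ (and analogously near $\pm\pi$). Consequently there exist $c>0$ and $\rho<1$ with $r_o(\theta)\leq e^{-c\theta^2}$ in a neighborhood of $0$, similarly near $\pm\pi$, and $r_o(\theta)\leq \rho$ outside these neighborhoods. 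Splitting the integral accordingly, each peak contributes at most $\int_{\mathbb{R}}e^{-2cn\theta^2}\,d\theta = \sqrt{\pi/(2cn)} = \mathcal{O}(1/\sqrt n)$, while the complement contributes $\mathcal{O}(\rho^{2n})$.

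The main conceptual difficulty, once the Fourier approach is chosen, is making sure that the environment $\epsilon$ and the Markov dependence inside the vertical skeleton do not spoil the estimate; fortunately both get absorbed for free into the symmetry $r_e\equiv r_o$ and the evenness of $r_o$, reducing the problem to the standard Laplace-type estimate above. The calculation itself is routine; the noteworthy feature is precisely that the bound produced is uniform in $\mathcal{F}_{2n}\vee\mathcal{G}$, which is what the subsequent treatment of $p_{n,2}$ will require.
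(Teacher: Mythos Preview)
Your proof is correct and follows essentially the same approach as the paper: Fourier inversion, collapsing the conditional characteristic function to the deterministic bound $r_o(\theta)^{2n}$ via $r_e\equiv r_o$ and evenness, and then a Laplace-type estimate near the two peaks $\theta=0,\pm\pi$ with the Taylor expansion $r_o(\theta)=1-\tfrac{8}{9}\theta^2+\cdots$. The only cosmetic differences are the phrasing of the interval splitting and your (sharper) $\mathcal{O}(\theta^4)$ remainder.
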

\begin{proof}
In lemma \ref{genfunction1} we saw that the conditional characteristic function of $X_{2n}$ w.r.t. $\mathcal{F}_{2n} \vee \mathcal{G}$ takes on the following form:
$$\chi(\theta):=\mathbb{E}(\exp(i\theta X_{2n})\mid \mathcal{F}_{2n} \vee \mathcal{G})=\prod_{y\in \mathbb{Z}}\chi_o(\theta\epsilon_y)^{m_{2n-1,o}^{(y)}}\chi_e(\theta\epsilon_y)^{m_{2n-1,e}^{(y)}}.$$
We have, by the inversion formula
$$\mathbb{P}(X_{2n}=0 \mid \mathcal{F}_{2n} \vee \mathcal{G})=\frac{1}{2\pi}\int_{-\pi}^\pi \chi(\theta) d\theta.$$
Define $r(\theta):=|\chi_e(\theta)|=|\chi_o(\theta)|=\frac{3}{\sqrt{17-8cos(2\theta)}}$. Thus
\begin{align*}
\frac{1}{2\pi}\int_{-\pi}^\pi \chi(\theta) d\theta\leq &\frac{1}{2\pi}\int_{-\pi}^\pi \prod_{y\in \mathbb{Z}}|\chi_o(\theta\epsilon_y)|^{m_{2n-1,o}^{(y)}}|\chi_e(\theta\epsilon_y)|^{m_{2n-1,e}^{(y)}}d\theta\\ 
=&\frac{1}{2\pi}\int_{-\pi}^\pi \prod_{y\in \mathbb{Z}}|\chi_o(\theta\epsilon_y)|^{\eta_{2n-1}(y)}d\theta\\
=&\frac{1}{2\pi}\int_{-\pi}^\pi  r(\theta)^{\sum_{y\in\mathbb{Z}}\eta_{2n-1}(y)} d\theta=\frac{1}{2\pi}\int_{-\pi}^\pi r(\theta)^{2n} d\theta.
\end{align*}
Now we use the parity of $r(\theta)$ and the fact that $r(\theta)<1$ in $\theta\in]0,\pi[\cup]\pi,2\pi[$ to bound with $K<1$ the function $r(\theta)$ in the interval $[\frac{\pi}{4},\frac{3}{4}\pi] \cup [-\frac{\pi}{4},-\frac{3}{4}\pi].$ We obtain
\begin{align*}
\mathbb{P}(X_{2n}=0 \mid \mathcal{F}_{2n} \vee \mathcal{G})\leq & \frac{1}{\pi}\int_{0}^{\frac{\pi}{4}} r(\theta)^{2n} d\theta +\frac{1}{\pi}\int_{\pi}^{\frac{5\pi}{4}} r(\theta)^{2n} d\theta + \mathcal{O}(K^{2n})\\
=& \frac{2}{\pi}\int_{0}^{\frac{\pi}{4}} r(\theta)^{2n} d\theta + \mathcal{O}(K^{2n}).
\end{align*}
Now, we have $r(\theta)=1-\frac{8}{9}\theta^2+\mathcal{O}(\theta^3)$ and so for large $n$ 
$$\int_{0}^{\frac{\pi}{4}} r(\theta)^{2n} d\theta \sim \int_0^{\frac{\pi}{4}} \left(e^{-\frac{8}{9}\theta^2}\right)^{2n}d\theta\sim \int_0^\infty \left(e^{-\frac{16}{9}n\theta^2}\right)d\theta\sim \frac{c}{\sqrt{n}},$$
with $c=\sqrt{\frac{9\pi}{16}}$.
\end{proof}
Finally we need the following lemma, whose proof can be found in the cited paper.
\begin{lemma}\label{An-Bn} [\cite{ref1}, Prop. 4.3]
For large $n,$ we have
$$\mathbb{P}(A_n \backslash B_n\mid \mathcal{F}_{2n})=\mathcal{O}(n^{-\frac{1}{4}+\frac{2\delta_3+\delta_1}{2}}).$$
\end{lemma}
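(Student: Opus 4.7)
The event $A_n\setminus B_n$ coincides with $A_n\cap\{|S_n|\le n^{1/2+\delta_3}\}$, where $S_n:=\sum_{y\in\mathbb{Z}}\epsilon_y\,\hat\eta_{2n-1}(y)$. Since $(\epsilon_y)_{y\in\mathbb{Z}}$ is an i.i.d.\ Rademacher family independent of the vertical skeleton, conditioning on $\mathcal{F}_{2n}$ turns the weights $\hat\eta_{2n-1}(y)$ into deterministic coefficients, and $S_n$ becomes a weighted Rademacher sum of independent mean-zero terms. The plan is to lower bound its conditional variance on $A_n$ and then apply a small-ball (anti-concentration) estimate to close the argument.

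For the variance, $\mathrm{Var}(S_n\mid\mathcal{F}_{2n})=\sum_y\hat\eta_{2n-1}(y)^2$. Because $\sum_y(m_{2n-1,o}(y)+m_{2n-1,e}(y))=2n$ and $m_o,m_e>0$, we have $\sum_y\hat\eta_{2n-1}(y)\ge c\,n$ with $c=\min(m_o,m_e)$. On $A_{n,1}$ the trajectory $(Y_k)_{k\le 2n}$ is contained in $[-n^{1/2+\delta_1},n^{1/2+\delta_1}]$, so the support of $\hat\eta_{2n-1}$ has cardinality at most $C\,n^{1/2+\delta_1}$. Cauchy--Schwarz then yields
$$\sigma_n^2:=\mathrm{Var}(S_n\mid\mathcal{F}_{2n})=\sum_y\hat\eta_{2n-1}(y)^2\ge\frac{\bigl(\sum_y\hat\eta_{2n-1}(y)\bigr)^2}{|\{y:\hat\eta_{2n-1}(y)>0\}|}\ge c'\,n^{3/2-\delta_1}.$$

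For the anti-concentration step, the conditional characteristic function of $S_n$ equals $\prod_y\cos\bigl(\theta\,\hat\eta_{2n-1}(y)\bigr)$; combining the elementary bound $|\cos x|\le\exp(-c\,x^2)$ for $|x|\le\pi/2$ with the above variance lower bound gives a Kolmogorov--Rogozin / Esseen-type estimate
$$\mathbb{P}\bigl(|S_n|\le t\bigm|\mathcal{F}_{2n}\bigr)\le C\,\frac{t}{\sigma_n}\qquad(t\ge 1).$$
Applied with $t=n^{1/2+\delta_3}$ and the variance bound $\sigma_n\ge c''\,n^{3/4-\delta_1/2}$, this gives, on $A_n$,
$$\mathbb{P}(A_n\setminus B_n\mid\mathcal{F}_{2n})\le C\,\frac{n^{1/2+\delta_3}}{n^{3/4-\delta_1/2}}=C\,n^{-1/4+(2\delta_3+\delta_1)/2},$$
which is the claimed bound.

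The main obstacle is precisely the anti-concentration inequality: converting a variance lower bound into a genuine small-ball estimate is not automatic when the integer-valued weights $\hat\eta_{2n-1}(y)$ are highly heterogeneous in size. To handle this rigorously one must either invoke Kesten's refinement of Kolmogorov--Rogozin, or integrate the characteristic function directly, truncating the $\theta$-range at scale $1/\sigma_n$ and using $|\cos(\theta a)|\le\exp(-c\,\theta^2 a^2)$ on the bulk while handling the tail by periodicity. This is exactly the content of \cite{ref1}, Proposition 4.3, to which the authors defer.
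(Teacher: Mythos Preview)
The paper itself offers no proof here, simply citing \cite{ref1}, Proposition~4.3; your sketch correctly reconstructs the argument behind that reference---reduce to a small-ball estimate for the Rademacher sum $S_n=\sum_y\epsilon_y\hat\eta_{2n-1}(y)$ conditional on $\mathcal{F}_{2n}$, lower-bound the variance via Cauchy--Schwarz using the range control from $A_{n,1}$, and then invoke an Esseen/Berry--Esseen-type anti-concentration bound. Two small remarks: first, the weights $\hat\eta_{2n-1}(y)=m_o\,m_{2n-1,o}(y)+m_e\,m_{2n-1,e}(y)$ are not integer-valued (here $m_o=5/3$, $m_e=2/3$), though this is immaterial for the argument; second, the event $A_{n,2}$, which you do not use explicitly, is what makes the characteristic-function bound go through---it guarantees $\max_y\hat\eta_{2n-1}(y)\le Cn^{1/2+\delta_2}$, so that on the relevant $\theta$-range one has $|\theta\,\hat\eta_{2n-1}(y)|\le\pi/2$ (equivalently, it controls the Lyapunov ratio in a Berry--Esseen approach), and this is clean provided one chooses $\delta_2\le\delta_3$, which the later constraints permit.
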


\begin{corollary}
$$\sum_{n\in\mathbb{N}} p_{n,2}<\infty.$$
\end{corollary}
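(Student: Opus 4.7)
The plan is to control $p_{n,2}$ by combining three ingredients already at our disposal: the a.s.\ upper bound on the conditional return probability $\mathbb{P}(X_{2n}=0\mid\mathcal{F}_{2n}\vee\mathcal{G})$ from Lemma \ref{Xn-lnn/n}, the probability bound on $A_n\setminus B_n$ from Lemma \ref{An-Bn}, and the local limit estimate on $\mathbb{P}(Y_{2n}=0)$ from Lemma \ref{Y_n_behaviuour}. The key observation that makes everything fit together is that $\{Y_{2n}=0\}\in\mathcal{F}_{2n}$ and $A_n\setminus B_n\in\mathcal{F}_{2n}\vee\mathcal{G}$, so these indicators can be pulled out of successive conditional expectations.

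First I would write
$$p_{n,2}=\mathbb{E}\!\left[\mathbf{1}_{\{Y_{2n}=0\}}\,\mathbf{1}_{A_n\setminus B_n}\,\mathbb{P}(X_{2n}=0\mid\mathcal{F}_{2n}\vee\mathcal{G})\right]$$
and apply Lemma \ref{Xn-lnn/n} to obtain
$$p_{n,2}\leq C\,n^{-1/2}\,\mathbb{P}(Y_{2n}=0,\,A_n\setminus B_n).$$
Next I would condition on $\mathcal{F}_{2n}$, using that $\{Y_{2n}=0\}\in\mathcal{F}_{2n}$, to write
$$\mathbb{P}(Y_{2n}=0,\,A_n\setminus B_n)=\mathbb{E}\!\left[\mathbf{1}_{\{Y_{2n}=0\}}\,\mathbb{P}(A_n\setminus B_n\mid\mathcal{F}_{2n})\right],$$
and apply Lemma \ref{An-Bn} together with Lemma \ref{Y_n_behaviuour} to bound this by
$$\mathcal{O}\!\left(n^{-1/4+\delta_3+\delta_1/2}\right)\cdot\mathcal{O}\!\left(n^{-1/2}\right).$$

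Chaining the bounds yields
$$p_{n,2}=\mathcal{O}\!\left(n^{-5/4+\delta_3+\delta_1/2}\right),$$
which is summable as soon as $\delta_3+\delta_1/2<1/4$. Since $\delta_1,\delta_3>0$ are at our disposal from the definition of $A_n$ and $B_n$, choosing them small enough (compatibly with whatever constraints appear in the treatment of $p_{n,1}$ and $p_{n,3}$) closes the argument.

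I don't anticipate a serious obstacle here: the three estimates slot together almost mechanically, and the only point requiring attention is the bookkeeping of measurability, namely that Lemma \ref{Xn-lnn/n} provides an a.s.\ bound with respect to $\mathcal{F}_{2n}\vee\mathcal{G}$, while Lemma \ref{An-Bn} conditions only on $\mathcal{F}_{2n}$, so the two lemmas have to be applied in the correct order (first integrate out the environment via Lemma \ref{Xn-lnn/n}, then integrate out the skeleton via Lemma \ref{An-Bn} against the local limit rate of $Y_{2n}$). If anything, the delicate step is ensuring that the eventual global choice of $\delta_1,\delta_2,\delta_3$ is jointly feasible with the estimates for $p_{n,1}$ and $p_{n,3}$; for the present corollary alone, the constraint $\delta_3+\delta_1/2<1/4$ leaves ample room.
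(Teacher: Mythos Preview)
Your proposal is correct and is essentially the same argument as the paper's own proof: you use the tower property to first pull out $\mathbf{1}_{A_n\setminus B_n}$ from $\mathbb{E}(\,\cdot\mid\mathcal{F}_{2n}\vee\mathcal{G})$, bound $\mathbb{P}(X_{2n}=0\mid\mathcal{F}_{2n}\vee\mathcal{G})$ by Lemma \ref{Xn-lnn/n}, then condition on $\mathcal{F}_{2n}$ and apply Lemma \ref{An-Bn} together with Lemma \ref{Y_n_behaviuour}, arriving at $p_{n,2}=\mathcal{O}(n^{-5/4+\delta_3+\delta_1/2})$ and the constraint $2\delta_3+\delta_1<1/2$. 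Your remark about the order in which the lemmas must be applied is exactly the point the paper handles via the nested conditional expectations.
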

\begin{proof}
\begin{align*}
 p_{n,2}=& \mathbb{P}(X_{2n}=0, Y_{2n}=0, A_n\backslash B_n)\\
 =&\mathbb{E}(\mathbf{1}_{Y_{2n=0}}\mathbb{E}(\mathbf{1}_{A_n\backslash B_n}\mathbf{1}_{X_{2n=0}} \mid \mathcal{F}_{2n}))\\
=&\mathbb{E}(\mathbf{1}_{Y_{2n=0}}\mathbb{E}(\mathbb{E}(\mathbf{1}_{A_n\backslash B_n}\mathbf{1}_{X_{2n=0}} \mid \mathcal{F}_{2n}\vee \mathcal{G})\mid \mathcal{F}_{2n}))\\
=&\mathbb{E}(\mathbf{1}_{Y_{2n=0}}\mathbb{E}(\mathbf{1}_{A_n\backslash B_n}\mathbb{P}(X_{2n}=0 \mid \mathcal{F}_{2n}\vee \mathcal{G})\mid \mathcal{F}_{2n}))\\
=& \mathcal{O}\left(n^{-\frac{1}{2}}n^{-\frac{1}{2}}n^{-\frac{1}{4}+\frac{2\delta_3+\delta_1}{2}}\right)
= \mathcal{O}(n^{-\frac{5}{4}+\frac{2\delta_3+\delta_1}{2}}).
\end{align*}
Where we used the estimates of lemma \ref{Y_n_behaviuour}, lemma \ref{Xn-lnn/n} and lemma \ref{An-Bn}. Now it's enough to choose $2\delta_3+\delta_1<\frac{1}{2}.$
\end{proof}

\subsubsection{Estimate of $p_{n,3}$}
Notice that $A_n^c=A_{n,1}^c \cup A_{n,2}^c.$ We are going to provide exponential estimates of both $\mathbb{P}(A_{n,1}^c \mid Y_{2n}=0)$ and $\mathbb{P}(A_{n,2}^c \mid Y_{2n}=0)$.
\begin{lemma}\label{lemma::lambda}
We have, for large $n$ and for every $t>0$
$$\mathbb{E}(e^{tY_{2n}})\sim c\left(q \cosh t + \sqrt{q^2 \cosh^2 t -(2q-1)}\right)^{2n},$$ where $c>0.$
\end{lemma}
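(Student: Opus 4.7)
The strategy is to express $\mathbb{E}(e^{tY_{2n}})$ as a matrix power involving a tilted transition matrix and then extract the asymptotic from its leading eigenvalue via Perron--Frobenius.

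First I set up the matrix representation. Since $Y_{2n}=\sum_{k=1}^{2n}\nu_k$ and $\nu_0=1$ deterministically, summing over trajectories of the Markov chain $(\nu_k)$ gives
$$\mathbb{E}(e^{tY_{2n}})=\sum_{\nu_1,\ldots,\nu_{2n}\in\{-1,1\}}\prod_{k=0}^{2n-1}\pi_\nu(\nu_k,\nu_{k+1})\,e^{t\nu_{k+1}}=e_+^{T}\,\widetilde\pi_t^{\,2n}\,\mathbf{1},$$
where I set $\widetilde\pi_t:=\pi_\nu D_t$ with $D_t=\mathrm{diag}(e^{-t},e^{t})$, ordering the states as $-1,+1$, and $e_+=(0,1)^T$ encodes the initial condition $\nu_0=1$.

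Next I diagonalize $\widetilde\pi_t$. Its trace is $2q\cosh t$ and its determinant equals $q^2-(1-q)^2=2q-1$, so the characteristic polynomial $\lambda^2-2q\lambda\cosh t+(2q-1)=0$ has roots
$$\lambda_\pm=q\cosh t\pm\sqrt{q^2\cosh^2 t-(2q-1)}.$$
Since $\cosh t\geq 1$ and $q=1/3$, the discriminant is bounded below by $q^2-(2q-1)=(1-q)^2>0$, so the two eigenvalues are real and distinct. Moreover $\lambda_++\lambda_-=2q\cosh t>0$ together with $\lambda_+-\lambda_->0$ yield $\lambda_+>|\lambda_-|$.

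Finally, because $\widetilde\pi_t$ has strictly positive entries for every real $t$, the Perron--Frobenius theorem guarantees that $\lambda_+$ is simple, equal to the spectral radius, and admits strictly positive right and left eigenvectors $u$ and $v$. The resulting spectral decomposition
$$\widetilde\pi_t^{\,2n}=\lambda_+^{2n}\,\frac{u\,v^{T}}{v^{T}u}+\lambda_-^{2n}\,P_-$$
then gives $\mathbb{E}(e^{tY_{2n}})=c\,\lambda_+^{2n}\bigl(1+\mathcal{O}((|\lambda_-|/\lambda_+)^{2n})\bigr)$ with $c=(e_+^{T}u)(v^{T}\mathbf{1})/(v^{T}u)$. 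The only point that genuinely requires attention is checking $c>0$, which is handled by the componentwise positivity of the Perron eigenvectors $u$ and $v$; the rest reduces to a direct eigenvalue calculation.
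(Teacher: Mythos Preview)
Your proof is correct and follows essentially the same route as the paper: both set up the tilted transition matrix $\pi_\nu D_t$, compute its two eigenvalues via trace and determinant, and read off the asymptotic from the spectral decomposition of the $2n$-th power. Your version is in fact slightly more careful, invoking Perron--Frobenius explicitly to secure $c>0$ from the positivity of the Perron eigenvectors, whereas the paper simply asserts the constant is positive.
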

\begin{proof}
We have, by the Markov property,
\begin{equation}\label{formula:: Markov}
 \mathbb{E}_{\nu_0}(e^{tY_{2n}})=e^{t\nu_0}\int \pi_{\nu}(\nu_0,dy_1)e^{ty_1}\int \pi_{\nu}(y_{1},dy_{2})e^{ty_{2}} \dots \int \pi_{\nu}(y_{2n-1},dy_{2n})e^{ty_{2n}}.
\end{equation}
It is now easy to see that can compute the quantity (\ref{formula:: Markov}) by means of the $2n$-th power of the matrix
\[ \pi_{\nu,t}:=\left( \begin{array}{cc}
q e^t & (1-q)e^{-t} \\
(1-q) e^t & qe^{-t} \end{array} \right)\]
which has the following eigenvalues
$$\lambda_{1,2}=q \cosh t \pm \sqrt{q^2 \cosh^2 t -(2q-1)}.$$
By the spectral decomposition, we know that $(\pi_{\nu,t})^{2n} \sim \lambda_1^{2n}(t) h_1h_1^T$ for large $n$, where $\lambda_1$ is the largest eigenvalue and $h_1$ represents the (column) eigenvector associated with $\lambda_1$. Hence for large $n$
$$\mathbb{E}(e^{tY_{2n}})=\sum_{y\in\{1,-1\}}(\pi_{\nu,t})^{2n} (\nu_0,y) \sim c\lambda_1^{2n}(t), \,\,\,\,\,c>0.$$
\end{proof}
\begin{proposition}\label{estimateA:n,1}
For large $n$, there exist $\delta>0$ such that
$$\mathbb{P}(A_{n,1}^c \mid Y_{2n}=0)= \mathcal{O} \left(\exp\left(-n^{\delta}\right)\right).$$
\end{proposition}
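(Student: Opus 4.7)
The plan is to reduce the conditional estimate to an unconditional tail bound, and then control the maximum of $Y_k$ via an exponential martingale built from the matrix $\pi_{\nu,t}$ of Lemma \ref{lemma::lambda}.

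First I would use the trivial majorization
$$\mathbb{P}(A_{n,1}^c\mid Y_{2n}=0)\leq\frac{\mathbb{P}(A_{n,1}^c)}{\mathbb{P}(Y_{2n}=0)}$$
together with the estimate $\mathbb{P}(Y_{2n}=0)\sim C/\sqrt{n}$ from Lemma \ref{Y_n_behaviuour}. This reduces matters to showing $\mathbb{P}(\max_{0\leq k\leq 2n}|Y_k|\geq n^{1/2+\delta_1})=\mathcal{O}(\exp(-cn^{2\delta_1}))$ for some $c>0$, since the polynomial factor $\sqrt{n}$ is absorbed into the exponential at the price of shrinking the exponent to any $\delta\in(0,2\delta_1)$.

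Next I would build the martingale. Because $\pi_{\nu,t}$ has strictly positive entries for every real $t$, Perron--Frobenius provides a unique positive right eigenvector $h_t$ with principal eigenvalue $\lambda_1(t)$, and the Markov property of $(\nu_k)$ shows directly that $M_k:=e^{tY_k}h_t(\nu_k)/\lambda_1(t)^{k}$ is a positive $\mathcal{F}_k$-martingale. The explicit formula gives $\lambda_1(t)\geq 1$ for every real $t$ (since $q\cosh t\geq q$ and $\sqrt{q^2\cosh^2 t-(2q-1)}\geq 1-q$). Applying Doob's maximal inequality, and using that both $h_t(1)$ and $\min h_t$ are bounded away from $0$ and $\infty$ uniformly for $t$ in a neighbourhood of $0$ (standard perturbation theory, using continuity in $t$ and $h_0\equiv\text{const}$), I obtain, for $t>0$,
$$\mathbb{P}\left(\max_{0\leq k\leq 2n}Y_k\geq n^{1/2+\delta_1}\right)\leq C\,\lambda_1(t)^{2n}\,e^{-tn^{1/2+\delta_1}}.$$

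Finally I would optimise in $t$. Expanding the formula of Lemma \ref{lemma::lambda} around $t=0$ with $q=\tfrac{1}{3}$ yields
$$\lambda_1(t)=1+\tfrac{qt^2}{2(1-q)}+\mathcal{O}(t^4)=1+\tfrac{t^2}{4}+\mathcal{O}(t^4),$$
so $\lambda_1(t)^{2n}\leq\exp(\tfrac{nt^2}{2}+\mathcal{O}(nt^4))$. The choice $t=n^{\delta_1-1/2}$, legitimate provided $\delta_1<\tfrac{1}{2}$, gives $nt^2=tn^{1/2+\delta_1}=n^{2\delta_1}$ together with a remainder $nt^4=n^{4\delta_1-1}=o(n^{2\delta_1})$, so the total exponent is $-\tfrac{1}{2}n^{2\delta_1}+o(n^{2\delta_1})$. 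The matching bound for $-Y_k$ follows by taking $t<0$ (equivalently, by the symmetry $\nu\leftrightarrow-\nu$ of the chain), and combining the two estimates with the reduction of the first paragraph finishes the proof. The only delicate point is the uniform control on $h_t$ near $0$, but this is a routine application of analytic perturbation theory for the $2\times 2$ irreducible matrix $\pi_\nu$.
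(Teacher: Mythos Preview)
Your argument is correct, but it proceeds by a genuinely different route from the paper. The paper conditions on $Y_{2n}=0$ from the start and invokes a \emph{reflection principle} for the skeleton (with the caveat that, because the law of $(Y_k,\nu_k)$ is not invariant under reflection, each reflected path picks up a multiplicative factor of $1/2$ or $2$); this converts the event $\{\max_k Y_k=y,\,Y_{2n}=0\}$ into $\{Y_{2n}=2y\}$, after which an exponential Chebyshev bound combined with the asymptotic $\mathbb{E}(e^{tY_{2n}})\sim c\,\lambda_1(t)^{2n}$ of Lemma~\ref{lemma::lambda} finishes the job. You instead drop the conditioning immediately, construct the Perron--Frobenius exponential martingale $M_k=e^{tY_k}h_t(\nu_k)/\lambda_1(t)^k$, and apply Doob's maximal inequality. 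Your approach is arguably cleaner: it sidesteps the asymmetry issue in the reflection step, it uses only the \emph{formula} for $\lambda_1(t)$ from Lemma~\ref{lemma::lambda} rather than the asymptotic statement, and your Taylor expansion $\lambda_1(t)=1+\tfrac{q}{2(1-q)}t^2+\mathcal{O}(t^4)=1+\tfrac{t^2}{4}+\mathcal{O}(t^4)$ is in fact the correct one (the paper's stated constant term $\tfrac{2+\sqrt{7}}{6}$ appears to be a slip, since $\lambda_1(0)=1$). The only price you pay is the soft appeal to analytic perturbation of the eigenvector $h_t$ near $t=0$, which for a $2\times2$ matrix with simple principal eigenvalue is indeed routine.
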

\begin{proof}
Let $a_n=[n^{\frac{1}{2}+\delta_1}]$; we have
\begin{align*}
\mathbb{P}(\max_{0\leq k\leq 2n} Y_k\geq a_n \mid Y_{2n}=0)=& \sum_{y\in\{a_n, a_{n+1},...,n\}} \frac{\mathbb{P}(\max_{0\leq k\leq 2n} Y_k=y, Y_{2n}=0)}{\mathbb{P}(Y_{2n}=0)}
\end{align*}
The estimate for $\mathbb{P}(\min_{0\leq k\leq 2n} Y_k\leq -a_n \mid Y_{2n}=0)$ can be obtained by the same argument, so we shall omit it.
By the reflection principle (note that the probability of any reflected path is equal to a multiplicative constant times the probability of the original path, this constant being $1/2$ or $2$)
\begin{align*}
\sum_{y\in\{a_n, a_{n+1},...,n\}}\mathbb{P}(\max_{0\leq k\leq 2n} Y_k=y, Y_{2n}=0)\leq& 2 \sum_{y\in\{a_n, a_{n+1},...,n\}}\mathbb{P}(Y_{2n}=2y)\\
=& 2\mathbb{P}(Y_{2n}\geq 2a_n)\\
\leq & 2 \inf_{t>0} \mathbb{P}(\exp(tY_{2n})\geq \exp(2ta_n))\\
\leq & 2 \inf_{t>0}\frac{\mathbb{E}(e^{tY_{2n}})}{e^{2ta_n}}.
\end{align*}
By Lemma \ref{lemma::lambda}, we have that for large $n$
\begin{align*}
\mathbb{E}(e^{tY_{2n}})
\sim &  c\left(q \cosh t +\sqrt{q^2 \cosh^2 t -(2q-1)}\right)^{2n}.
\end{align*}
Now by the Taylor expansion at $t=0$, and substituting $q=\frac{1}{3}$, we obtain
\begin{align*}
q \cosh t + \sqrt{q^2 \cosh^2 t -(2q-1)}=&\frac{2+\sqrt{7}}{6}+st^2+o(t^2)
< 1 +st^2,
\end{align*}
with $s=\frac{1}{6}+\frac{1}{3\sqrt{7}},$ where the inequality holds for $t\leq t^*$ for sufficiently small $t^*$. Hence 
\begin{align*}
\inf_{t>0}\frac{\mathbb{E}(e^{tY_{2n}})}{e^{2ta_n}}<c\inf_{t>0, t\leq t^*}  \exp(-2ta_n)\exp(2nst^2)=c\exp\left(-\frac{a_n^2}{2sn}\right)=c\exp\left(\frac{-n^{2\delta_1}}{2s}\right).
\end{align*}
where in the first equality we used the fact that the minimum is attained at  $t=\frac{a_n}{2ns}$, which goes to $0$ as $n$ tends to infinite.
Then, putting all together and using lemma \ref{Y_n_behaviuour}, we obtain
$$\mathbb{P}(A_{n,1}^c \mid Y_{2n}=0)= \mathcal{O} \left(n n^{\frac{1}{2}}\exp\left(\frac{-n^{2\delta_1}}{2s}\right)\right).$$
\end{proof}

\begin{lemma}\label{first-return-genf}
Let $\sigma_{a,a}$ the time of first return to state $a\in\mathbb{Z}\times\{-1,1\}$ of the Markov chain $(Y_n,\nu_n)$ starting at $a$. 
We have
$$\mathbb{E}(e^{-t\sigma_{a,a}})\sim\exp(-c\sqrt{t}),$$
with $c>0$ (i.e. $\lim_{t\to 0} \frac{\mathbb{E}(e^{-t\sigma_{a,a}})}{\exp(-c\sqrt{t})}=1$).
\end{lemma}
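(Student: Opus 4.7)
The plan is to exploit the renewal identity between the Laplace transform of the first-return time and the discounted Green function of the chain $(Y_n,\nu_n)$ at $a$. Let
$$F(t) := \mathbb{E}(e^{-t\sigma_{a,a}}), \qquad G(t) := \sum_{n=0}^{\infty} e^{-tn}\,\mathbb{P}_a((Y_n,\nu_n)=a).$$
A first-passage decomposition (splitting each return at $a$ at its first return time) gives $G(t) = 1 + F(t)\,G(t)$, hence
$$G(t) \;=\; \frac{1}{1-F(t)}, \qquad t>0.$$
Since $F(t)\to 1$ as $t\to 0^+$, the claimed asymptotic is equivalent to $1-F(t)\sim c\sqrt{t}$, i.e.\ to $G(t)\sim c^{-1}/\sqrt{t}$ as $t\to 0^+$. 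So everything reduces to a sharp asymptotic for $G(t)$ near $0$.

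The next step is to upgrade Lemma \ref{Y_n_behaviuour} from the marginal $Y_{2n}$ to the joint chain. The chain $(Y_n,\nu_n)$ has period $2$ (the parity of $Y_n$ flips at every step), so the sum defining $G(t)$ runs effectively over even $n$. Writing $Y_{2n}=\sum_{k=1}^{2n}\nu_k$ as a Markov-modulated sum with finite-state driver $\nu_k$ (stationary distribution $(1/2,1/2)$, mean zero, positive asymptotic variance), the local limit theorem for ergodic finite-state Markov chains (exactly the tool cited for Lemma \ref{Y_n_behaviuour}) yields
$$\mathbb{P}_a\bigl((Y_{2n},\nu_{2n})=a\bigr)\;\sim\;\frac{C}{\sqrt{n}}, \qquad C>0,$$
with the constant $C$ matching the one of Lemma \ref{Y_n_behaviuour} up to the weight $1/2$ of the $\nu$-coordinate.

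A standard Tauberian/Abel computation then gives the behaviour of $G(t)$ near $0$: indeed
$$G(t) \;=\; \sum_{n=0}^{\infty} e^{-2tn}\,\mathbb{P}_a\bigl((Y_{2n},\nu_{2n})=a\bigr) \;\sim\; C\sum_{n\geq 1}\frac{e^{-2tn}}{\sqrt{n}} \;\sim\; C\int_0^{\infty}\frac{e^{-2tx}}{\sqrt{x}}\,dx \;=\; C\sqrt{\frac{\pi}{2t}},$$
as $t\to 0^+$, where the replacement of the sum by the integral is justified by the monotone decay of $e^{-2tn}/\sqrt{n}$ and a Riemann-sum argument. Setting $c:=\sqrt{2/\pi}/C$, we obtain
$$1-F(t)\;=\;\frac{1}{G(t)}\;\sim\;c\sqrt{t},$$
and since $F(t)\to 1$, we can exponentiate: $\log F(t) \sim F(t)-1 \sim -c\sqrt{t}$, which is exactly $\mathbb{E}(e^{-t\sigma_{a,a}})\sim \exp(-c\sqrt{t})$ in the stated $\lim_{t\to 0}$ sense.

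The main obstacle is the joint local limit theorem in the second step: the marginal statement of Lemma \ref{Y_n_behaviuour} does not literally give the probability of $(Y_{2n},\nu_{2n})=a$, and one needs the version for additive functionals of an ergodic Markov chain to separate the $Y$- and $\nu$-coordinates with the correct constant. Everything else (the renewal inversion and the Tauberian step) is routine once this local asymptotic is available.
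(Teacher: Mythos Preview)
Your proof is correct and follows the same route as the paper: both use the renewal identity $G=1/(1-F)$, the local limit estimate $\mathbb{P}_a((Y_{2n},\nu_{2n})=a)\sim C/\sqrt{n}$, and the passage $1-F(t)\sim c\sqrt t\sim 1-e^{-c\sqrt t}$. The only cosmetic difference is that the paper invokes the Tauberian theorem (Feller) to go from the local limit asymptotic to $G_{a,a}(s)\sim C_1/\sqrt{1-s}$, whereas you carry out the equivalent Abel/Riemann-sum computation by hand; you are also more explicit than the paper about why the joint chain $(Y_n,\nu_n)$ inherits the $n^{-1/2}$ local limit from Lemma~\ref{Y_n_behaviuour}.
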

\begin{proof}
Let $p_{a,a}^{(n)}:=\mathbb{P}_a((Y_n,\nu_n)=a)$ and $G_{a,a}(s):=\sum_{k=0}^\infty p_{a,a}^{(k)} s^k.$ By lemma \ref{Y_n_behaviuour} we have
$p_{a,a}^{(n)} \sim \frac{C}{\sqrt{n}}$  as $n\to \infty$
with $C>0$.
This implies, by the Tauberian theorem (\cite{Feller2}, p.447, th.5), that there exists $C_1>0$ such that
$G_{a,a}(s)\sim\frac{C_1}{\sqrt{1-s}}$ as $s\to 1$. Then, using a standard result from the theory of Markov chain (e.g. cf. \cite{Woess}), we see that as $s\to 1$
$$\mathbb{E}(s^{\sigma_{a,a}})=1-\frac{1}{G_{a,a}(s)}\sim1-c\sqrt{1-s},$$
where $c=C_1^{-1}.$
Then, if we write $s=e^{-t}$, we have for $t\to 0$ 
$$\mathbb{E}(e^{-t\sigma_{a,a}})\sim 1-c\sqrt{1-e^{-t}}\sim 1-c\sqrt{t} \sim e^{-c\sqrt{t}}.$$
\end{proof}
\begin{proposition}\label{estimateA:n,2}
For large $n$ there exist $\delta'>0$ such that
$$\mathbb{P}(A_{n,2}^c \mid Y_{2n}=0)= \mathcal{O}\left(\exp(-n^{\delta'})\right).$$
\end{proposition}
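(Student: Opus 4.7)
The plan is to reduce the event $A_{n,2}^c$ to a large-deviations statement about sums of i.i.d.\ first-return times of the skeleton chain and then apply Chernoff, using the Laplace-transform asymptotic $\mathbb{E}(e^{-t\sigma_{a,a}})\sim \exp(-c\sqrt{t})$ provided by Lemma \ref{first-return-genf}.

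First I would remove the conditioning. By Lemma \ref{Y_n_behaviuour}, $\mathbb{P}(Y_{2n}=0)^{-1}=\mathcal{O}(\sqrt{n})$, so it is enough to prove that $\mathbb{P}(A_{n,2}^c)=\mathcal{O}(\exp(-n^{\delta''}))$ for some $\delta''>0$, since any polynomial prefactor can then be absorbed into the exponent at the cost of shrinking $\delta''$. Because the walk $(Y_k)$ can visit at most $2n+1$ distinct ordinates by time $2n-1$, a union bound gives
$$\mathbb{P}(A_{n,2}^c)\leq \sum_{|y|\leq 2n}\mathbb{P}\bigl(\eta_{2n-1}(y)\geq n^{1/2+\delta_2}\bigr),$$
and the decomposition $\eta_{2n-1}(y)=\eta_{2n-1}(y,1)+\eta_{2n-1}(y,-1)$ reduces the task to bounding the tail of $\eta_{2n-1}(a)$ uniformly in $a=(y,\pm1)$, with threshold $N=\tfrac{1}{2}n^{1/2+\delta_2}$.

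Next I would invoke the strong Markov property: on the event $\{\eta_{2n-1}(a)\geq N\}$, the $N-1$ consecutive inter-visit times at $a$ after the first visit are i.i.d.\ copies $\sigma^{(1)},\dots,\sigma^{(N-1)}$ of $\sigma_{a,a}$, whose law does not depend on $y$ by translation invariance of $\pi_\nu$, and their sum is at most $2n$. A standard Chernoff estimate then gives, for every $t>0$,
$$\mathbb{P}(\eta_{2n-1}(a)\geq N)\leq e^{2nt}\,\mathbb{E}(e^{-t\sigma_{a,a}})^{N-1}.$$
Plugging in the bound $\mathbb{E}(e^{-t\sigma_{a,a}})\leq \exp(-\tfrac{c}{2}\sqrt{t})$, valid for all sufficiently small $t>0$ by Lemma \ref{first-return-genf}, and optimising over $t$, the saddle point occurs at $\sqrt{t^{*}}=\Theta(N/n)$ and the resulting tail bound is $\exp(-c''N^{2}/n)$ for some $c''>0$. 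With $N=\tfrac{1}{2}n^{1/2+\delta_2}$ this is $\exp(-c''' n^{2\delta_2})$; summing over the $\mathcal{O}(n)$ admissible sites and the two signs, then multiplying by $\mathbb{P}(Y_{2n}=0)^{-1}=\mathcal{O}(\sqrt{n})$, I obtain $\mathcal{O}(\exp(-n^{\delta'}))$ for any $\delta'\in(0,2\delta_2)$.

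The main obstacle is a consistency check on the Chernoff optimisation: the optimal $t^{*}$ satisfies $t^{*}=\Theta(n^{2\delta_2-1})$, which must tend to zero so that the asymptotic expansion of $\mathbb{E}(e^{-t\sigma_{a,a}})$ near $t=0$ in Lemma \ref{first-return-genf} is actually applicable. This forces $\delta_2<1/2$, which is compatible with the parameter constraints already in force in the estimates of $p_{n,1}$ and $p_{n,2}$. A minor technical point is that Lemma \ref{first-return-genf} provides only an asymptotic equivalence rather than a uniform bound; handling this by restricting to $0<t\leq t_0$ and absorbing constants into $c/2$ does not affect the final exponential rate.
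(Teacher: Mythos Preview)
Your proposal is correct and follows essentially the same route as the paper: remove the conditioning via Lemma~\ref{Y_n_behaviuour}, union-bound over sites, split $\eta_{2n-1}(y)$ into its $(y,1)$ and $(y,-1)$ parts, translate the local-time tail into a statement about sums of i.i.d.\ return times, and Chernoff-optimise using the Laplace asymptotic of Lemma~\ref{first-return-genf} to obtain a bound of order $\exp(-c\,n^{2\delta_2})$. Your explicit check that the optimal $t^*=\Theta(n^{2\delta_2-1})$ must tend to zero (forcing $\delta_2<1/2$) and your handling of the asymptotic-versus-uniform issue are in fact slightly more careful than the paper's own write-up.
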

\begin{proof}
We have
$$\mathbb{P}(A_{n,2}^c \mid Y_{2n}=0)=\mathbb{P}\left(\max_{y\in \mathbb{Z}}\eta_{2n-1}(y)\geq n^{\frac{1}{2} + \delta_2}\mid Y_{2n}=0\right)\leq \sum_{y\in\mathbb{Z}} \frac{\mathbb{P}\left(\eta_{2n-1}(y)\geq n^{\frac{1}{2} + \delta_2}\right)}{\mathbb{P}(Y_{2n}=0)}.$$
On the other hand we have
\begin{equation}\label{eq::1}
\mathbb{P}\left(\eta_{2n-1}(y)\geq a_n\right)\leq \mathbb{P}\left(\eta_{2n-1}(y,1)\geq \frac{a_n}{2}\right)+\mathbb{P}\left(\eta_{2n-1}(y,-1)\geq \frac{a_n}{2}\right).
\end{equation}
Now let $\sigma_{a,a}^{(k)}$ be the time of $k$-th return to point $a$ for the process $(Y_n,\nu_n)_{n\geq 0}$ starting at $a$. Observe that
$\mathbb{P}\left(\eta_{2n-1}(a)\geq a_n\right)\leq \mathbb{P}_a\left(\sigma_{a,a}^{([a_n])}\leq 2n \right)$
and consider the first term at the right-hand-side of (\ref{eq::1}). Notice that by lemma \ref{first-return-genf}, $\mathbb{E}(e^{-t\sigma_{a,a}})^m \sim \exp(-cm\sqrt{t})$ for every $m\in\mathbb{N}$; then, for $C>1$ there exists $t^*$ s.t. for every $t<t^*$, $\mathbb{E}(e^{-t\sigma_{a,a}})^m \leq C\exp(-cm\sqrt{t})$. Hence
for sufficiently large $n$
\begin{align*}
\mathbb{P}\left(\eta_{2n-1}(y,1)\geq \frac{a_n}{2}\right)\leq&  \inf_{t>0}\mathbb{P}_y \left(\exp\left(-t\sigma_{(y, 1),(y, 1)}^{([\frac{a_n}{2}])}\right)\geq \exp(-2nt)\right)\\
\leq & \inf_{t>0}\exp(2nt)\left(\mathbb{E}\left(\exp\left(-t\sigma_{(y, 1),(y, 1)}^{(1)}\right)\right)\right)^{[\frac{a_n}{2}]}\\
\leq & C\inf_{t>0, t<t^*} \exp\left(2nt-\frac{a_n}{2}c\sqrt{t}\right)\\
=& C\exp\left(-\frac{c^2a_n^2}{32n}\right)=C\exp\left(-c'n^{2\delta_2}\right)
\end{align*}
with $c'=\frac{c^2}{32}$, where we used the fact that the minimum is attained at $t=\left(\frac{ca_n}{8n}\right)^2$.

Since we can provide, with the same procedure, an exponential estimate also for $\eta_{2n-1}(y,-1)$, we finally obtain by lemma \ref{Y_n_behaviuour}
$$\mathbb{P}(A_{n,2}^c \mid Y_{2n}=0)\leq \sum_{y\in\mathbb{Z}} \frac{\mathbb{P}\left(\eta_{2n-1}(y)\geq n^{\frac{1}{2} + \delta_2}\right)}{\mathbb{P}(Y_{2n}=0)}= \mathcal{O}\left(nn^{\frac{1}{2}}\exp(-cn^{\delta_2})\right).$$
\end{proof}
\begin{corollary}
$$\sum_{n\in \mathbb{N}} p_{n,3}<\infty.$$
\end{corollary}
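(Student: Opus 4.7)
The plan is to reduce $p_{n,3}$ to a sum of two pieces corresponding to the two ways the event $A_n$ can fail, and then apply directly the exponential bounds of Propositions \ref{estimateA:n,1} and \ref{estimateA:n,2}, together with the polynomial bound on $\mathbb{P}(Y_{2n}=0)$ from Lemma \ref{Y_n_behaviuour}.

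First I would drop the event $\{X_{2n}=0\}$ in the definition of $p_{n,3}$ (it can only help):
\begin{equation*}
p_{n,3}=\mathbb{P}(X_{2n}=0,Y_{2n}=0,A_n^c)\leq \mathbb{P}(Y_{2n}=0,A_n^c).
\end{equation*}
Since $A_n^c=A_{n,1}^c\cup A_{n,2}^c$, a union bound gives
\begin{equation*}
p_{n,3}\leq \mathbb{P}(Y_{2n}=0,A_{n,1}^c)+\mathbb{P}(Y_{2n}=0,A_{n,2}^c)=\mathbb{P}(Y_{2n}=0)\bigl(\mathbb{P}(A_{n,1}^c\mid Y_{2n}=0)+\mathbb{P}(A_{n,2}^c\mid Y_{2n}=0)\bigr).
\end{equation*}

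Next I would plug in the estimates already established. Lemma \ref{Y_n_behaviuour} gives $\mathbb{P}(Y_{2n}=0)=\mathcal{O}(n^{-1/2})$, while Propositions \ref{estimateA:n,1} and \ref{estimateA:n,2} yield exponents $\delta,\delta'>0$ with
\begin{equation*}
\mathbb{P}(A_{n,1}^c\mid Y_{2n}=0)=\mathcal{O}(\exp(-n^\delta)),\qquad \mathbb{P}(A_{n,2}^c\mid Y_{2n}=0)=\mathcal{O}(\exp(-n^{\delta'})).
\end{equation*}
Setting $\delta^{\ast}:=\min(\delta,\delta')>0$ and combining everything,
\begin{equation*}
p_{n,3}=\mathcal{O}\!\left(n^{-1/2}\exp(-n^{\delta^{\ast}})\right),
\end{equation*}
which is a super-polynomially decaying sequence and is therefore summable.

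There is no real obstacle: all the analytical work sits in the two preceding propositions and in the local limit estimate for the skeleton. The only thing worth being careful about is that the $\delta_1,\delta_2$ used to define $A_{n,1}$ and $A_{n,2}$ are the same as those appearing in the estimate of $p_{n,2}$; but since Propositions \ref{estimateA:n,1} and \ref{estimateA:n,2} produce exponential decay for \emph{any} choice of $\delta_1,\delta_2>0$, the bound above survives the (earlier) constraint $2\delta_3+\delta_1<\tfrac{1}{2}$ needed for the summability of $p_{n,2}$. Hence all three pieces $p_{n,1}$, $p_{n,2}$, $p_{n,3}$ are summable, yielding $\sum_n p_n<\infty$ and, by the preceding lemma, the a.s. transience of $(M_n)_{n\geq 0}$.
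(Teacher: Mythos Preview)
Your argument is correct and follows essentially the same route as the paper: drop the event $\{X_{2n}=0\}$, reduce to $\mathbb{P}(A_n^c\mid Y_{2n}=0)$, and invoke Propositions \ref{estimateA:n,1} and \ref{estimateA:n,2}. The only cosmetic difference is that the paper bounds $\mathbb{P}(Y_{2n}=0)\leq 1$ rather than inserting the $n^{-1/2}$ factor from Lemma \ref{Y_n_behaviuour}, since the stretched-exponential decay alone already guarantees summability.
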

\begin{proof}
Combining proposition \ref{estimateA:n,1} and \ref{estimateA:n,2}, we know that for large $n$ $$\mathbb{P}(A_n^c\mid Y_{2n}=0)=\mathcal{O}(\exp(-n^{\min\{\delta,\delta'\}})).$$ Then the result follows by the trivial majorization
$$p_{n,3}:=\mathbb{P}( X_{2n}=0, Y_{2n}=0, A_n^c)\leq \mathbb{P}( Y_{2n}=0, A_n^c)\leq  \mathbb{P}(A_n^c\mid Y_{2n}=0).$$ 
\end{proof}
This completes the proof of theorem \ref{Th:: RANDOM}.

\subsection{The random walk on the lattice $\mathbf{H}_{f}$}
This section is devoted to the proof of theorem \ref{Th:: PERIODIC}.
Let $\mathbb{Z}_Q=\mathbb{Z}/ Q$ and, for every $y\in\mathbb{Z}$, we write $\overline y=y  \mod Q$. Define for every $n>0$
\begin{equation*}
W_n:=(Y_{n-1}, \nu_{n-1}; 	Y_{n}, \nu_{n})
\end{equation*}
and
\begin{equation*}
\overline W_n:=(\overline Y_{n-1}, \nu_{n-1}; 	\overline Y_{n}, \nu_{n}),
\end{equation*}
where $\overline Y_n=Y_n \mod Q$, and $W_0:=(-1,-1;0,1)$ , $\overline W_0:=(\overline{-1},-1;\overline{0},1)$. 
\begin{lemma}
	The process $(\overline W_n)_{n\geq 0}$ is a one-class recurrent Markov chain with period $2$. Its stationary distribution $\pi$ is defined as follows
	\begin{align}\label{stationary}
		\begin{cases}
			\pi(\overline y, \nu; \overline y', \nu')= \frac{2}{3}\frac{1}{2Q} \,\,\,\,\,\, \text{ if } \nu\ne \nu', \overline y'= \overline {y+\nu'}  \\
			\pi(\overline y, \nu; \overline y', \nu')= \frac{1}{3}\frac{1}{2Q} \,\,\,\,\,\, \text{ if } \nu=\nu', \overline y'= \overline {y+\nu'}
		\end{cases}
	\end{align}
\end{lemma}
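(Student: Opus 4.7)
The plan is to reduce the whole question to the simpler one-step chain $(\overline Y_n,\nu_n)_{n\geq 0}$ on the finite space $\mathbb{Z}_Q\times\{-1,1\}$, of which $\overline W_n$ is the two-step ``window'' $(\overline Y_{n-1},\nu_{n-1};\overline Y_n,\nu_n)$. The first thing I would check is that $(\overline Y_n,\nu_n)$ is itself Markov: the original transitions of $(Y_n,\nu_n)$ recalled in Section~2.1 depend only on $\nu_n$, and the update $Y_{n+1}=Y_n+\nu_{n+1}$ descends to $\overline Y_{n+1}=\overline{Y_n+\nu_{n+1}}$, which is well-defined on the quotient. Consequently $\overline W_n$ is a Markov chain on the set of admissible pairs $\{(\overline y,\nu;\overline y',\nu'):\overline y'=\overline{y+\nu'}\}$, a set of cardinality $4Q$.

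For the qualitative claim, I would establish irreducibility by explicit construction: since $\pi_\nu$ has all entries positive, one can flip or preserve $\nu$ freely at each step, and because $\pm 1$ generate the cyclic group $\mathbb{Z}_Q$ one can shift $\overline Y$ to any prescribed residue; combining the two freedoms reaches any admissible target pair from any starting pair. For the period, each transition of $\overline W$ changes the second $\overline Y$-coordinate by $\pm 1\pmod Q$; because $Q$ is even, the parity of $\overline Y$ is a well-defined $\mathbb{Z}_2$-valued function on $\mathbb{Z}_Q$ that flips at every step. This splits the state space into two classes between which $\overline W$ alternates, forcing the period to be at least $2$; exhibiting any length-$2$ return loop (a ``flip-flip'' cycle that restores both $\nu$ and $\overline Y$) shows the period equals $2$.

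For the stationary distribution, I would first verify that the uniform law $\mu(\overline y,\nu)=\frac{1}{2Q}$ is invariant for the one-step chain. A state $(\overline y,\nu)$ has exactly two predecessors, $(\overline{y-\nu},1)$ and $(\overline{y-\nu},-1)$, with incoming $\nu$-transition weights $\pi_\nu(1,\nu)$ and $\pi_\nu(-1,\nu)$; since these sum to $q+(1-q)=1$, one gets $(\mu P)(\overline y,\nu)=\frac{1}{2Q}$. The stationary law of the window chain is then the standard two-step extension
\begin{equation*}
\pi(\overline y,\nu;\overline y',\nu')=\mu(\overline y,\nu)\,P\bigl((\overline y,\nu),(\overline y',\nu')\bigr),
\end{equation*}
and substituting $q=1/3$ for $\nu$-preserving transitions and $1-q=2/3$ for $\nu$-flipping ones recovers exactly the values given in the lemma, on the admissible subset $\overline y'=\overline{y+\nu'}$.

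The main obstacle I anticipate is disentangling the period-$2$ structure from irreducibility: the parity splitting of $\mathbb{Z}_Q$ could falsely suggest reducibility, so the path argument must be careful to stay within each parity class when connecting states. Otherwise the proof is essentially bookkeeping, and the evenness of $Q$ is precisely what guarantees that the parity of $\overline Y$ is well-defined on the quotient — hence that the period is $2$ rather than $1$ — without affecting the irreducibility of the underlying one-step chain.
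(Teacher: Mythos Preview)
Your proposal is correct and follows essentially the same route as the paper: both reduce to the one-step chain $(\overline Y_n,\nu_n)$ on $\mathbb{Z}_Q\times\{-1,1\}$, identify its stationary law as the uniform $\tilde\pi(\overline y,\nu)=\frac{1}{2Q}$, and then obtain the stationary distribution of the window chain via $\pi(\overline y,\nu;\overline y',\nu')=\tilde\pi(\overline y,\nu)\,p_{(\overline y,\nu),(\overline y',\nu')}$. The paper dispatches irreducibility and the period-$2$ claim as ``easy to verify'', whereas you spell out the parity argument (which indeed hinges on $Q$ being even) and the path construction for irreducibility; this extra detail is sound and adds nothing that conflicts with the paper's argument.
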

\begin{proof}
	It is easy to verify that $(\overline Y_n, \nu_n)_{n\geq 0}$ is a Markov chain with $2Q$ states and period $2$, and that its stationary distribution is $\tilde \pi (\overline y,\nu)= \frac{1}{2Q}$, $\forall (\overline y,\nu)\in \mathbb{Z}_Q \times \{-1,1\}$. Then $(\overline Y_n, \nu_n; 	\overline Y_{n+1}, \nu_{n+1})_{n\geq 0}$ is again a MC, whose stationary distribution $\pi$ is directly derived from $\tilde\pi$ by defining
	$$\pi(\overline y, \nu; \overline y', \nu'):=\tilde \pi (\overline y, \nu) p_{(\overline y, \nu), (\overline y', \nu')},$$
	where $p_{\centerdot,\centerdot}$ is the transition probability of $(\overline Y_n, \nu_n)_{n\geq 0}$.
	The others statements are straightforward to verify.
\end{proof}


Note that $W_n$ enclose the information of the last three movements of the vertical skeleton $Y_n$: the reason for considering such a process, and its analogous $\overline W_n$ in $(\mathbb{Z}_Q\times\{-1,1\})^2$, is that we will need to control the number of times $(Y_n)_{n \geq 0}$ ``changes direction" at a certain level before it returns to the origin. This is done essentially by taking advantage of the periodicity of the orientations, and will in turn enable us to bound the difference between the number of
steps to the right and to the left of the embedded random walk $X_n$, distinguishing between the odd-valued and the even-valued steps, and to deduce that the probability of $X_n$ returning to $0$ is of order $n^{-1/2}$ for a set of paths with positive probability, which will imply the recurrence of $M$.

We begin by defining the following functionals of $\overline W.$
$$S_{n,e}:=\sum_{i=1}^{2n} f_e(\overline W_i):=\sum_{i=1}^{2n} f(\overline Y_{i-1})\mathbf{1}_{\{\nu_{i-1} \ne \nu_i\}},$$  
$$S_{n,o}:=\sum_{i=1}^{2n} f_o(\overline W_i):=\sum_{i=1}^{2n} f(\overline Y_{i-1})\mathbf{1}_{\{\nu_{i-1} = \nu_i\}}.$$

Moreover for every $n\in\mathbb{N}$ define the event
\begin{equation}\label{event}
\mathcal{Z}_n:= \{ W_{2n}= W_0\}=\{\overline W_{2n} = \overline W_0, Y_{2n}=0\}
\end{equation}

\begin{proposition}\label{lemma:: SeSoestimate}
Let $C>0.$ We have, for sufficiently large $n$
$$\mathbb{P}(|S_{n,e}|+|S_{n,o}|\leq C\sqrt{n}\mid \mathcal{Z}_n)\geq \delta_{C}>0.$$
\end{proposition}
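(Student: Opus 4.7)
The strategy is to recognize $S_{n,e}$ and $S_{n,o}$ as centered additive functionals of the ergodic, finite-state Markov chain $(\overline W_n)_{n\geq 0}$, and to apply a joint local limit theorem for the $\mathbb{Z}^3$-valued additive functional $(Y_n, S_{n,e}, S_{n,o})$, which together with Lemma \ref{Y_n_behaviuour} yields a non-degenerate limit for the conditional probability.

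I would first verify that $f_e$ and $f_o$ are centered under the stationary measure $\pi$ given in (\ref{stationary}). Using that the total probability of changing (resp.\ keeping) the direction from a state $(\overline y,\nu)$ is $2/3$ (resp.\ $1/3$), one computes
\begin{equation*}
\mathbb{E}_\pi[f_e] = \sum_{\overline y,\nu} f(\overline y)\,\tilde\pi(\overline y,\nu)\cdot \tfrac{2}{3} = \frac{2}{3Q}\sum_{k=0}^{Q-1}f(k) = 0,
\end{equation*}
and analogously $\mathbb{E}_\pi[f_o]=0$, thanks to the hypothesis $\sum_{k=0}^{Q-1}f(k)=0$. Consequently the CLT for functionals of ergodic finite-state Markov chains guarantees that $S_{n,e}/\sqrt n$ and $S_{n,o}/\sqrt n$ are tight at the Gaussian scale.

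The core step is a joint local limit theorem. The triple $(Y_n,S_{n,e},S_{n,o})$ is itself a centered $\mathbb{Z}^3$-valued additive functional of $\overline W$ (recall $Y_n=\sum_i\nu_i$). Applying a multidimensional local limit theorem, via the standard spectral perturbation of the transition operator of $\overline W$ twisted by a joint Fourier character, one obtains, for any bounded Borel set $B\subset\mathbb{R}^2$,
\begin{equation*}
\mathbb{P}\!\left(\mathcal Z_n,\ \Bigl(\tfrac{S_{n,e}}{\sqrt n},\tfrac{S_{n,o}}{\sqrt n}\Bigr)\in B\right)\;\sim\;\frac{K}{\sqrt n}\int_{B} g(u,v)\,du\,dv,
\end{equation*}
where $g$ is the Gaussian conditional density of the limiting vector given $Y_{2n}/\sqrt n=0$ and $\overline W_{2n}=\overline W_0$. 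Since $\mathbb{P}(\mathcal Z_n)\sim c/\sqrt n$ by Lemma \ref{Y_n_behaviuour}, dividing yields
\begin{equation*}
\mathbb{P}\bigl(|S_{n,e}|+|S_{n,o}|\leq C\sqrt n\mid\mathcal Z_n\bigr) \longrightarrow \frac{K}{c}\int_{\{|u|+|v|\leq C\}} g(u,v)\,du\,dv,
\end{equation*}
which is strictly positive for every $C>0$, giving $\delta_C>0$.

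The main obstacle is the technical verification of this local limit theorem with the joint finite- and infinite-state constraint encoded by $\mathcal Z_n$. Two points deserve care: first, the aperiodicity / non-lattice analysis of the joint characteristic function of $(Y,S_{e},S_{o})$ along the trajectories of $\overline W$, needed to justify the spectral perturbation argument; and second, the non-degeneracy of the limiting covariance matrix, which is equivalent to showing that no non-trivial linear combination of $\nu$, $f_e$, $f_o$ is a coboundary for $\overline W$ (a property that can be checked directly from the explicit transitions and the assumption on $f$). A more elementary alternative is to split the trajectory at successive returns of $\overline W$ to a fixed reference state, turning the problem into a multidimensional local CLT for an i.i.d.\ sequence of excursion increments with exponential tails and zero mean, and then to treat the constraints in $\mathcal Z_n$ by standard conditioning on a random number of i.i.d.\ summands.
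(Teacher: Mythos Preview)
Your approach is correct and rests on the same two ingredients as the paper: the centering of $f_e,f_o$ under $\pi$ and a local limit theorem for additive functionals of the finite chain $\overline W$. The implementation differs. You apply a joint LLT directly to the three-dimensional functional $(Y_{2n},S_{n,e},S_{n,o})$, which, as you note, requires verifying non-degeneracy of the limiting covariance (the non-coboundary condition for linear combinations of $\nu,f_e,f_o$) and the lattice structure of the range. The paper instead lifts to the full $4Q$-dimensional occupation vector $\overline\eta_{2n}=(\overline\eta_{2n}(1),\dots,\overline\eta_{2n}(4Q))$ and quotes an off-the-shelf LLT for it (Kolmogorov, lemma~16): there is a lattice $Z$ of some dimension $r$ and a uniform lower bound $\mathbb{P}(\overline\eta_{2n}=x,\ \overline W_{2n}=\overline W_0)\geq c'/n^{r/2}$ for all $x\in Z$ within $c\sqrt n$ of the mean $2n\pi$. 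Since $S_{n,e}=u\overline\eta_{2n}^T$, $S_{n,o}=v\overline\eta_{2n}^T$ and $Y_{2n}=w\overline\eta_{2n}^T$ with $u(2n\pi)^T=v(2n\pi)^T=w(2n\pi)^T=0$, the event $\{|S_{n,e}|+|S_{n,o}|\leq C\sqrt n,\ \mathcal Z_n\}$ contains all lattice points in this box satisfying the single linear constraint $wx^T=0$; counting them gives order $n^{(r-1)/2}$ points and hence the $n^{-1/2}$ lower bound on the unconditional probability, which combined with $\mathbb{P}(\mathcal Z_n)\asymp n^{-1/2}$ finishes. The advantage of the paper's route is that it sidesteps precisely the obstacle you flag: no covariance or coboundary analysis for the specific functionals $f_e,f_o$ is needed, only the trivial fact that $w$ is non-constant on $Z$. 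Your route is more conceptual and would yield a full conditional limit law, at the cost of that extra verification.
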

\begin{proof}
To simplify our notation, we identify the states of $\overline W_n$ with the integers $\{1,2,...,4Q\}$, with arbitrary order. 
Accordingly we define $$\pi=(\pi_1,...,\pi_{4Q})$$ to be the vector where the $i$-th component is the value that the stationary distribution takes at state $i$, and the occupation measure  $$\overline\eta_n=(\overline\eta_n(1),...,\overline\eta_n(4Q)),$$ where $\overline\eta_n(i):=\sum_{k=0}^{n} \mathbf{1}_{\{\overline W_k=i\}},$ for $1\leq i\leq 4Q.$
By definition we have $$S_{n,e}=\sum_{i=1}^{4Q} u_i \overline\eta_{2n}(i)=u\overline \eta_{2n}^T,$$ where  $u\in\{-1,0,1\}^{4Q}$ is the vector such that $u_i$ equals to the value that $f_e$ takes on the $i$-th state. 
Analogously, let $v\in\{-1,0,1\}^{4Q}$ such that  $S_{n,o}=v\overline \eta_{2n}^T$ and $w\in\{-1,1\}^{4Q}$ such that $Y_{2n}=\sum_{i=1}^{2n}\nu_i = w\overline \eta_{2n}^T$ . 
Note that $u,v,w$ are linearly independent vectors and that we have 
\begin{equation}\label{average}
u(2n\pi)^T=v(2n\pi)^T=w(2n\pi)^T=0,
\end{equation} by (\ref{stationary}).

Let $c>0$. By the multidimensional local limit theorem for the random vector $\overline \eta_{2n}$  (lemma 16 in \cite{refkolmogorov}) we know that there exist a lattice $Z\subset \mathbb{Z}^{4Q}$ of dimension $r$, $2<r\leq4Q,$ and a constant $c'>0$ dependent of $c$, such that
\begin{equation}\label{lowerBound}
\mathbb{P}(\overline\eta_{2n}= x, \overline W_{2n}=\overline W_{0})  \geq  \frac{c'}{n^{r/2}},
\end{equation}
for large $n$ and for all $x\in Z$ such that $|x_i-\pi_i| \leq c\sqrt{n}$, $1\leq i\leq 4Q$.
Hence, by (\ref{lowerBound}) and (\ref{average}), and taking $c=\frac{C}{4Q},$ we have
\begin{align}\label{lasteq}
\mathbb{P}(|S_{n,e}|+|S_{n,o}|\leq C\sqrt{n}; \mathcal{Z}_n) 
\geq & \mathbb{P}\left(|\overline \eta_{2n}(i) - \pi_i|\leq c\sqrt{n}, \forall i; \mathcal{Z}_n\right)\nonumber\\
=&\sum_{\substack{x\in Z,  w x^T=0, \\|x_i-\pi_i| \leq c\sqrt{n}, \forall i}} \mathbb{P}(\overline \eta_{2n}=x, \overline W_{2n}=\overline W_{0})\nonumber \\
\geq & |\{x\in Z,  w x^T=0, |x_i-\pi_i| \leq c\sqrt{n}, \forall i\}| \frac{c'}{n^{r/2}}\nonumber\\
=& C'\frac{n^{(r-1)/2)}}{n^{r/2}}\geq \frac{C'}{\sqrt{n}},
\end{align}
with $C'>0$.
Finally by (\ref{lasteq}) and lemma \ref{Y_n_behaviuour} $$\mathbb{P}(|S_{n,e}|+|S_{n,o}|\leq C\sqrt{n}| \mathcal{Z}_n)\geq \frac{\mathbb{P}(|S_{n,e}|+|S_{n,o}|\leq C\sqrt{n}, \mathcal{Z}_n)}{\mathbb{P}(Y_{2n}=0)}\geq \delta_C>0.$$
%
\end{proof}
\subsubsection{Proof of recurrence} 
Define the following set of constrained paths 
\begin{align*}
\mathrm{Constr}(n,f):=& \left\{(\gamma,q): \{-1,0,1,...,2n\} \longrightarrow \mathbb{Z}\times \{-1,1\} \text{ s.t. } \forall i, \, \gamma(i)=  \gamma(i-1)\pm 1, \right. \\
&\left. (\gamma(-1), q(-1);\gamma(0), q(0))=(\gamma(2n-1), q(2n-1); \gamma(2n),q(2n))=W_0, \right. \\
& \left. \left|\sum_{i=1}^{2n} f(\overline \gamma_{i-1})\mathbf{1}_{\{q_{i-1} \ne q_i\}}\right| +\left|\sum_{i=1}^{2n} f(\overline \gamma_{i-1})\mathbf{1}_{\{q_{i-1} = q_i\}}\right|\leq C\sqrt{n}\right\}.
\end{align*}
Observe that if we prove that for every  $(\gamma,q)\in \mathrm{Constr}(n,f)$
\begin{equation}\label{eq:: Xn-keyestimate}
\mathbb{P}(X_{2n}=0 \mid (Y_i,\nu_i)=(\gamma (i), q(i))\, \forall i\leq 2n)\geq \frac{c}{\sqrt{n}},
\end{equation}
 then the recurrence of the random walk will follow: in fact, thanks to (\ref{eq:: Xn-keyestimate}) and to Proposition \ref{lemma:: SeSoestimate} we'd have for large $n$
\begin{align*}
\mathbb{P}(X_{2n}=0, Y_{2n}=0)\geq& \mathbb{P}\left(X_{2n}=0,\mathcal{Z}_n, |S_{n,e}|+|S_{n,o}|\leq C\sqrt{n} \right)\\
=& \sum_{(\gamma,q) \in \mathrm{Constr}(n,f)} \mathbb{P}(X_{2n}=0 \mid (Y_i,\nu_i)=(\gamma (i), q(i))\, \forall i\leq 2n)\\
\times & \mathbb{P}((Y_i,\nu_i)=(\gamma (i), q(i))\, \forall i\leq 2n)\\
\geq&\frac{c}{\sqrt{n}}\sum_{(\gamma,q) \in \mathrm{Constr}(n,f)} \mathbb{P}((Y_i,\nu_i)=(\gamma (i), q(i))\, \forall i\leq 2n)\\
=& \frac{c}{\sqrt{n}}\mathbb{P}(|S_{n,e}|+|S_{n,o}|\leq C\sqrt{n}, \mathcal{Z}_n)\\
\geq& \frac{c'}{n},
\end{align*}
with $c'>0$.

From now on we fix $(\gamma,q)\in \mathrm{Constr}(n,f)$ and every probability will be taken conditionally to $$\{(Y_i,\nu_i)=(\gamma (i), q(i))\, \forall i\leq 2n\},$$ although, in order to simplify the notation, we will sometimes omit to write it. Let $N_e^+$ and $N_e^-$ be, respectively, the number of right (left) directed even steps of the embedded random walk up to time $2n$, and $N_o^+, N_o^-$ the analogous quantities for the odd steps. Observe that $S_{n,e}=N_e^+ - N_e^-$ and $S_{n,o}=N_o^+ - N_o^-$. In particular, since $(\gamma,q)\in \mathrm{Constr}(n,f)$, we have 
$$|N_e^+-N_e^-|+|N_o^+-N_o^-|\leq C\sqrt{n}.$$
\begin{lemma}
Conditionally to $\{(Y_i,\nu_i)=(\gamma (i), q(i))\, \forall i\leq 2n\}$, we have
$\mathbb{E}(X_{2n})=\mathcal{O}(\sqrt{n})$
and
$\sigma^2(X_{2n})\sim Cn$, $C>0.$
\end{lemma}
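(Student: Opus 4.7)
The plan is to exploit the conditional independence structure of $X_{2n}$ given the fixed skeleton path $(\gamma,q)$. With the path fixed, the occupation counts $m_{2n-1,o}(y)$ and $m_{2n-1,e}(y)$ are deterministic, and I would first re-express
$$X_{2n} = \sum_{y\in\mathbb{Z}} f(y) \left( \sum_{i=1}^{m_{2n-1,o}(y)} \xi_{i,o}^{(y)} + \sum_{i=1}^{m_{2n-1,e}(y)} \xi_{i,e}^{(y)} \right),$$
exhibiting $X_{2n}$ as a sum of mutually independent odd and even geometric random variables carrying deterministic signs $f(y)\in\{-1,1\}$.

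For the expectation, I take expectations term by term using means $m_o$ and $m_e$, and observe the rearrangement
$$\sum_y f(y)\, m_{2n-1,o}(y) = \sum_{k=0}^{2n-1} f(Y_k)\mathbf{1}_{\{\nu_k=\nu_{k+1}\}} = S_{n,o},$$
together with its even analogue, giving $\mathbb{E}(X_{2n}) = m_o S_{n,o} + m_e S_{n,e}$. The defining inequality of $\mathrm{Constr}(n,f)$ then bounds $|\mathbb{E}(X_{2n})| \leq \max(m_o,m_e)(|S_{n,o}|+|S_{n,e}|) \leq C'\sqrt{n}$.

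For the variance, independence reduces it to
$$\sigma^2(X_{2n}) = \sum_y f(y)^2 \bigl( s_o^2\, m_{2n-1,o}(y) + s_e^2\, m_{2n-1,e}(y) \bigr),$$
and since $f(y)^2 = 1$ this collapses to $s_o^2\sum_y m_{2n-1,o}(y) + s_e^2\sum_y m_{2n-1,e}(y)$. The step that carries the conclusion is the simple but crucial observation $s_o^2 = s_e^2$: indeed $\xi_o$ and $\xi_e+1$ share the same law (both assign mass $3\cdot 4^{-(k+1)}$ to $2k+1$, $k\geq 0$), hence have identical variance $s^2$. Using $\sum_y(m_{2n-1,o}(y)+m_{2n-1,e}(y)) = \sum_y \eta_{2n-1}(y) = 2n$, I then get the sharp identity $\sigma^2(X_{2n}) = 2 s^2 n \sim Cn$ with $C=2s^2>0$. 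The only potential obstacle, should $s_o^2 \neq s_e^2$, would be to separately control the proportion of odd versus even skeleton steps on the constrained path; here the equality of variances makes this unnecessary and the proof reduces to a trivial identity.
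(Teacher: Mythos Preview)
Your argument is correct and, for the expectation, coincides with the paper's: both compute $\mathbb{E}(X_{2n})=m_o S_{n,o}+m_e S_{n,e}$ (the paper writes this as $m_o(N_o^+-N_o^-)+m_e(N_e^+-N_e^-)$, which is the same thing) and invoke the constraint $|S_{n,o}|+|S_{n,e}|\le C\sqrt{n}$.

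For the variance the two arguments diverge slightly, and yours is in fact sharper. The paper simply says that $\sigma^2(X_{2n})$ is a sum of $2n$ finite variances and concludes $\sigma^2(X_{2n})\sim Cn$. Taken literally this only gives $\sigma^2(X_{2n})=\Theta(n)$, since a priori the constant could depend on the proportion of odd versus even steps along the particular constrained path $(\gamma,q)$, and that proportion is not controlled by membership in $\mathrm{Constr}(n,f)$. Your observation that $\xi_o\overset{d}{=}\xi_e+1$, hence $s_o^2=s_e^2=:s^2$, removes this path dependence entirely and yields the exact identity $\sigma^2(X_{2n})=2s^2 n$. So your route is the same in spirit but closes a small gap the paper leaves implicit.
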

\begin{proof}
Note that
\begin{align*}
\mathbb{E}(X_{2n})
=& \mathbb{E}(\xi_{i,e})(N_e^+-N_e^-) + \mathbb{E}(\xi_{i,o})(N_o^+-N_o^-)
\leq \max\{\mathbb{E}(\xi_{i,o}),\mathbb{E}(\xi_{i,e})\} C\sqrt{n}. 
\end{align*} 
On the other hand, the variance of $X_{2n}$ is, by independence, the sum of the variances of the even and odd geometric random variables, and so since both of them has finite variance we obtain  
$\sigma^2(X_{2n})\sim C n$ for some $C>0$.
\end{proof}

\begin{lemma}\label{LLT-Xn}
There exists $c>0$ such that, for every large $n$ and conditionally to $\{(Y_i,\nu_i)=(\gamma (i), q(i))\, \forall i\leq 2n\}$
we have 
\begin{equation}\label{eq::estimateXn}
\mathbb{P}(X_{2n}=0)\geq \frac{c}{\sqrt{n}}.
\end{equation} 
\end{lemma}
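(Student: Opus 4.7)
The plan is to establish a local central limit theorem for $X_{2n}$ conditionally on the fixed skeleton path $(\gamma,q)\in\mathrm{Constr}(n,f)$. Under this conditioning, $X_{2n}$ is a sum of $2n$ independent $\pm 1$–signed odd or even geometric variables, so by Lemma \ref{genfunction1} its characteristic function factors as
$$\chi(\theta)=\prod_{i=1}^{2n}\chi_{*}(f(\gamma(i{-}1))\theta)=r(\theta)^{2n}\exp\bigl(i[\Delta_{n,o}\phi_o(\theta)+\Delta_{n,e}\phi_e(\theta)]\bigr),$$
where $\chi_{*}$ is $\chi_o$ or $\chi_e$ depending on whether $q(i{-}1)=q(i)$, and $\phi_o,\phi_e$ denote the arguments of $\chi_o,\chi_e$. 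First I would verify that $X_{2n}\in 2\mathbb{Z}$: the path starts at $\nu_0=1$ and ends at $\nu_{2n}=1$ (enforced by $W_{2n}=W_0$), so the number of direction reversals $\Sigma_{n,e}$ is even, hence so is $\Sigma_{n,o}=2n-\Sigma_{n,e}$. Since each odd-geometric summand is odd and each even-geometric summand is even, the parity of $X_{2n}$ equals $\Sigma_{n,o}\bmod 2=0$. By Fourier inversion on the lattice $2\mathbb{Z}$,
$$\mathbb{P}(X_{2n}=0)=\frac{1}{\pi}\int_{-\pi/2}^{\pi/2}\chi(\theta)\,d\theta.$$

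I would split the integral at $|\theta|=\delta/\sqrt{n}$ for a small $\delta>0$ to be fixed. Outside this window, I use that on the compact set $\{\delta/\sqrt n\leq|\theta|\leq\pi/2\}$ the modulus $r(\theta)$ is strictly less than $1$ (the only zeros of $1-r$ on $[-\pi/2,\pi/2]$ occur at $\theta=0$), and the local estimate $r(\theta)\leq\exp(-c_0\theta^2)$ near $0$, to bound this contribution by $O(e^{-c\delta^2})/\sqrt{n}$, which is small once $\delta$ is fixed large. On the central window, Taylor expansion gives $r(\theta)^{2n}\geq \exp(-c_1 n\theta^2)\cdot(1+O(\delta^2))$, and $\phi_o(\theta)=m_o\theta+O(\theta^3)$, $\phi_e(\theta)=(m_o-1)\theta+O(\theta^3)$. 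Combined with the constraint $|\Delta_{n,o}|+|\Delta_{n,e}|\leq C\sqrt n$, the argument of $\chi(\theta)$ is bounded in modulus by $C(m_o+1)\delta+O(\delta^3/n)$, which is $\leq 1/2$ once $\delta$ is small enough. Hence $\operatorname{Re}\chi(\theta)\geq \tfrac12 e^{-c_1 n\theta^2}$ uniformly in the path, and the substitution $u=\sqrt{n}\,\theta$ yields
$$\mathbb{P}(X_{2n}=0)\geq \frac{1}{\pi\sqrt{n}}\int_{-\delta}^{\delta}\tfrac12 e^{-c_1 u^2}\,du-o(1/\sqrt n)\geq \frac{c}{\sqrt n}$$
for some $c>0$, which is the desired bound.

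The key point for uniformity over admissible paths is that both the mean $\mu_n=m_o\Delta_{n,o}+m_e\Delta_{n,e}$ and the variance $\sigma_n^2=s_o^2\Sigma_{n,o}+s_e^2\Sigma_{n,e}$ of $X_{2n}$ (conditionally on the path) admit path-independent bounds: $|\mu_n|\leq (m_o+m_e)C\sqrt n$ directly from membership in $\mathrm{Constr}(n,f)$, and $\sigma_n^2\geq 2n\min(s_o^2,s_e^2)$ because $\Sigma_{n,o}+\Sigma_{n,e}=2n$. This uniformity is what allows $\delta$ to be chosen independently of $(\gamma,q)$.

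The main obstacle I anticipate is controlling the phase of $\chi(\theta)$ uniformly in the path so that one actually obtains a \emph{lower} bound (and not merely the usual asymptotic) on the oscillatory integral; this is handled by the above choice of a small central window inside which the cosine stays bounded below by a positive constant. Equivalently, one can appeal to a Gnedenko-type local limit theorem for sums of independent lattice variables (span $2$, Cramér's condition satisfied since $r(\theta)<1$ on $[-\pi/2,\pi/2]\setminus\{0\}$), which yields directly
$$\mathbb{P}(X_{2n}=0)=\frac{2}{\sqrt{2\pi\sigma_n^2}}\exp\!\left(-\frac{\mu_n^2}{2\sigma_n^2}\right)+o(1/\sqrt n),$$
and the right-hand side is bounded below by $c/\sqrt n$ uniformly in admissible paths thanks to the bounds above.
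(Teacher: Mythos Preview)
Your fallback route---invoking a Gnedenko-type local limit theorem for the independent lattice sum to obtain $\mathbb{P}(X_{2n}=0)=\tfrac{2}{\sqrt{2\pi\sigma_n^2}}\exp(-\mu_n^2/2\sigma_n^2)+o(1/\sqrt n)$ and then using $|\mu_n|=O(\sqrt n)$, $\sigma_n^2\asymp n$---is correct and is exactly what the paper does. The paper spells out the classical decomposition $J_1+J_2+J_3+J_4$ of the inversion integral and checks the Lyapunov condition; since the summands take only the four laws $\pm\xi_o,\pm\xi_e$, every estimate (Lyapunov ratio, Cram\'er bound away from $0$, Taylor remainder) is automatically uniform in the path $(\gamma,q)$, which is what justifies your uniformity remark.

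Your primary ``direct'' argument, however, contains a genuine gap. You take $\delta$ small so that on $|\theta|\le\delta/\sqrt n$ the phase $|\Delta_{n,o}\phi_o(\theta)+\Delta_{n,e}\phi_e(\theta)|\le C'\delta\le 1/2$ and hence $\operatorname{Re}\chi(\theta)\ge\tfrac12 r(\theta)^{2n}$. But with $\delta$ small the outer contribution is \emph{not} $o(1/\sqrt n)$ as you write: one only has
\[
\Bigl|\int_{|\theta|>\delta/\sqrt n}\chi(\theta)\,d\theta\Bigr|\le \int_{|\theta|>\delta/\sqrt n}r(\theta)^{2n}\,d\theta=\frac{1}{\sqrt n}\int_{|u|>\delta}e^{-c_0u^2}\,du+O(K^{2n}),
\]
which for small $\delta$ is essentially the full Gaussian mass over $\sqrt n$, while the inner contribution is only of order $\delta/\sqrt n$. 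The two requirements ``$\delta$ large for the tail'' (which you yourself note) and ``$\delta$ small for the phase'' cannot be met by a single cutoff, so the final inequality $\tfrac{1}{\pi\sqrt n}\int_{-\delta}^{\delta}\tfrac12 e^{-c_1u^2}\,du-o(1/\sqrt n)\ge c/\sqrt n$ does not follow. The remedy is precisely what you sketch afterwards: one cannot crudely lower-bound the cosine on a tiny window but must integrate $e^{-cn\theta^2}e^{i\mu_n\theta}$ over the whole central region and compute it exactly, which is the Gnedenko computation the paper carries out.
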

\begin{proof}
For every $k\in\mathbb{N}$ let $\xi_k$ be the random variable that represents the $k$-th step of the horizontal random walk $X_n$. We write
\begin{align*}
X_{2n}=\sum_{k=1}^{2n} \xi_k=&\sum_{i=1}^{N_e^+}\xi_{i,e}+\sum_{i=1}^{N_o^+}\xi_{i,o}-\sum_{i=N_e^+ +1}^{N_e^+ +N_e^-}\xi_{i,e}-\sum_{i=N_o^+ +1}^{N_o^+ +N_o^-}\xi_{i,o},
\end{align*}  
and for every $k$ let $a_k:=\mathbb{E}(\xi_k)$, $b_k^2:=\sigma^2(\xi_k)$ and $$A_n:=\sum_{i=1}^{2n} \mathbb{E}(\xi_k), \,\,\,\,\,\,\,\, B_n^2:= \sum_{i=1}^{2n} \sigma^2(\xi_k).$$ 

First, we are going to show that
\begin{equation}\label{eq::LLTxn}
\left| B_n\mathbb{P}(X_{2n} =0)- \frac{2}{\sqrt{2\pi }} e^{-\frac{1}{2} \left(\frac{A_n}{B_n}\right)^2} \right|\to 0,
\end{equation} 
as $n\to \infty$. \footnote{Notice that, if $(Y_{2n}, \nu_{2n})=(0,1)$, which happens in our case since we are considering a constrained path satisfying this property, then the value taken by $X_{2n}$ is either a null or a even integer (cf. figure \ref{figure::honeycomb}); in particular, $\mathbb{P}(X_{2n} =0)>0$. 
}
Then, thanks to the previous lemma, we obtain the estimate (\ref{eq::estimateXn}).

To prove (\ref{eq::LLTxn}), we generalize a classical approach (cf. \cite{Gnedenko}). Let $\phi_{\xi_k}(t)=\mathbb{E}(e^{it\xi_k}),$ and $\phi_{X_{2n}}(t)=\mathbb{E}(e^{it X_{2n}})=\phi_{\sum_{k=1}^{2n} \xi_k}(t)=\Pi_{k=1}^{2n} \phi_{\xi_k}(t)$, and precisely
\begin{align*}
\phi_{X_{2n}}(t)
=& \chi_e(t)^{N_e^+}\chi_o(t)^{N_o^+}\chi_e(-t)^{N_e^-}\chi_o(-t)^{N_o^-},
\end{align*} 
where we recall that
$\chi_o(t)=\frac{3e^{it}}{4-e^{2it}}$
and $\chi_e(t)=e^{-it} \chi_o(t).$
In particular note that $|\chi_o(t)|=|\chi_e(t)|=|\chi_o(-t)|=|\chi_e(-t)|=1$ for $t=0$ and $t=\pi$, and $<1$ otherwise.
Now, since $\sum_{k=-\infty}^\infty \mathbb{P}(X_{2n}=2k) e^{i2kt}=\phi_{X_{2n}}(t)$, if we integrate both sides of this equation from $-\pi/2$ to $\pi/2$ we obtain $\pi\mathbb{P}(X_{2n}=0)=\int_{\frac{-\pi}{2}}^{\frac{\pi}{2}} \phi_{X_{2n}}(x) dx.$
Then
$$\pi\mathbb{P}(X_{2n}=0)=\frac{1}{B_n}\int_{\frac{-\pi B_n}{2}}^{\frac{\pi B_n}{2}} \phi_{X_{2n}}(t/B_n) dt=\frac{1}{B_n}\int_{\frac{-\pi B_n}{2}}^{\frac{\pi B_n}{2}} e^{it\frac{A_n}{B_n}}\phi_{\frac{X_n-A_n}{B_n}}(t) dt.$$
The following equality is easily proved for every $z\in\mathbb{R}$. 
$$\frac{1}{\sqrt{2\pi}}e^{-\frac{1}{2}z^2} =\frac{1}{2\pi} \int e^{-itz-\frac{t^2}{2}} dt.$$ 
In particular, in our case, we take $z:=-\frac{A_n}{B_n}. $
We write 
\begin{equation}
R_n:=2\pi\left[\frac{B_n}{2} \mathbb{P}(X_{2n}=0)-\frac{1}{\sqrt{2\pi}}e^{-\frac{1}{2}(\frac{A_n}{B_n})^2}\right]=J_1+J_2+J_3+J_4,
\end{equation}
where
\begin{align*}
J_1=&\int_{-A}^A e^{it\frac{A_n}{B_n}} \left[\phi_{\frac{X_{2n}-A_n}{B_n}}(t) - e^{-\frac{t^2}{2}} \right] dt\\
J_2=& - \int_{|t|>A} e^{it\frac{A_n}{B_n}-\frac{t^2}{2}} dt\\
J_3=& \int_{\epsilon B_n<|t|<\pi B_n/2} e^{it\frac{A_n}{B_n}} \phi_{\frac{X_{2n}-A_n}{B_n}}(t) dt\\
J_4=& \int_{A<|t|<\epsilon B_n} e^{it\frac{A_n}{B_n}} \phi_{\frac{X_{2n}-A_n}{B_n}}(t) dt
\end{align*}
So to complete the proof we must show that these quantities tend to $0$ as $n\to \infty$ and for sufficiently large $A$ and small $\epsilon$.

First, we show that the sequence $(\xi_k)_{k\geq 1}$ satisfies the Lyapunov condition with $\delta=1$, that is
$$\lim_{n\to \infty} \frac{1}{B_n^{2+\delta}} \sum_{k=1}^{2n} \mathbb{E}|\xi_k - a_k|^{2+\delta}=0.$$
In fact, by the previous lemma, $B_n^2 \sim Cn$ with $C>0$ and the $\xi_k$'s clearly have finite moment of the third order, so for appropriate $C'>0$
$$\frac{1}{B_n^{3}} \sum_{k=1}^{2n} \mathbb{E}|\xi_k - a_k|^{3}\sim\frac{1}{Cn^{3/2}} \sum_{k=1}^{2n} \mathbb{E}|\xi_k - a_k|^{3} \leq \frac{C'n}{n^{3/2}} \sim \frac{C'}{n^{1/2}}.$$
 Then by the CLT we have that, as $n\to \infty$, 
$$\phi_{\frac{X_{2n}-A_n}{B_n}}(t)\to e^{-\frac{t^2}{2}}$$
which implies $|J_1| \to 0.$

We have $$|J_2|\leq \int_{|t|>A} |e^{-it\frac{A_n}{B_n}}| |e^{-\frac{t^2}{2}}| dt = \int_{|t|>A} |e^{-\frac{t^2}{2}}| \leq \frac{2}{A}e^{-\frac{A^2}{2}}$$
and so by choosing a sufficiently large $A$ we can make $J_2$ arbitrarily small.

For every $k$, $\phi_{\xi_k}(t)$ is either $\chi_e(t)$, $\chi_e(-t)$, $\chi_o(t)$ or $\chi_o(-t)$. Since for $\epsilon < |t| < \pi/2$ we have $|\phi_{\xi_k}(t)|<1,$ we can find $c>0$ such that $|\phi_{\xi_k}(t)|\leq e^{-c}<1$ for every $k$. Then, if $\epsilon B_n < |t| < \pi B_n/2$, we have
\begin{align*}
|\phi_{\frac{X_{2n}-A_n}{B_n}}(t)|=& \Pi_{k=1}^{2n} |\phi_{\xi_k - a_k}(t/B_n)|
= \Pi_{k=1}^{2n} |e^{-ia_k t/B_n}||\phi_{\xi_k }(t/B_n)|\\
=&\Pi_{k=1}^{2n} |\phi_{\xi_k }(t/B_n)|\leq \Pi_{k=1}^{2n} e^{-c} = e^{-cn}, 
\end{align*}
which tends to $0$ as $n\to \infty$. This implies $|J_3|\to 0$ as $n\to \infty$.

By the Taylor expansion at $t=0$
\begin{align*}
|\phi_{\frac{X_{2n}-A_n}{B_n}}(t)|
=&\Pi_{k=1}^{2n} |\phi_{\xi_k - a_k}(t/B_n)| = \Pi_{k=1}^{2n} \left|1- \frac{\sigma_k^2t^2}{2B_n^2}+o\left(\frac{t^2}{B_n^2}\right)\right|.
\end{align*}
Now, if $|t|\leq \epsilon B_n$ for sufficiently small $\epsilon$, we have 
\begin{align*}
|\phi_{\frac{X_{2n}-A_n}{B_n}}(t)|
<& \Pi_{k=1}^{2n} \left|1-  \frac{\sigma_k^2t^2}{4B_n^2}\right|< \Pi_{k=1}^{2n} e^{-\frac{\sigma_k^2t^2}{4B_n^2}} = e^{-t^2/4}.
\end{align*}
Then $$|J_4|\leq 2\int_A^{\epsilon B_n}e^{-t^2/4} dt < 2\int_A^\infty e^{-t^2/4} dt$$ where the right hand side tends to $0$ as $A \to \infty$. So we can make $|J_4|$ arbitrarily small.
\end{proof}
The proof of recurrence is now complete.

\subsection{The random walk on the $\mathbf{H}_{\overline \epsilon,\lambda}$ lattice}
This section is devoted to the proof of theorem \ref{Th:: PERTURBED}.

\subsubsection{Proof of theorem \ref{Th:: PERTURBED} (i)} To prove a.s. transience, we can follow the same technique we used for the case of a random environment defining, for $n\geq 0$, the events $A_n$ and $B_n$ just as before (the only difference is that, this time, we write $\overline \epsilon y$ in place of $\epsilon_y$).
Now, it is clear that many of the estimates we obtained in the case of a random environment still hold: in fact, according to \cite{ref2}, we only need to provide an estimate on $A_n\backslash B_n$, conditionally to $\mathcal{F}:=\sigma((Y_i,\nu_i); n=1,...,n)$. This estimate is given by the following result, whose proof can be found in the cited paper.
\begin{proposition}[Proposition 3.2, \cite{ref2}]
For all $\beta <1$, there exists a $\delta_\beta>0$ such that -uniformly in $\mathcal{F}$- for all large $n$
$$\mathbb{P}(A_n\backslash B_n\mid \mathcal{F} )=\mathcal{O}(n^{-\delta_\beta}).$$
\end{proposition}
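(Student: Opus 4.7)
Since $A_n\in\mathcal{F}$, on $A_n$ the conditional probability I need to bound equals
\[
\mathbb{P}\Bigl(\Bigl|\sum_y \overline\epsilon_y \hat\eta_{2n-1}(y)\Bigr|\leq n^{1/2+\delta_3}\,\Big|\,\mathcal{F}\Bigr).
\]
I would first write $\overline\epsilon_y=f(y)+\lambda_y(\epsilon_y-f(y))$ and split the sum as $Z=Z_0+Z_1$, where $Z_0:=\sum_y f(y)\hat\eta_{2n-1}(y)$ is deterministic given $\mathcal{F}$ and
\[
Z_1:=\sum_y \lambda_y(\epsilon_y-f(y))\hat\eta_{2n-1}(y)
\]
is an independent sum whose $y$-th summand equals $-2f(y)\hat\eta_{2n-1}(y)$ with probability $c/(2|y|^\beta)$ and vanishes otherwise. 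The problem thus becomes an anti-concentration estimate for $Z_1$ near the point $-Z_0$.

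My plan is to apply the Esseen concentration inequality with window $T=n^{1/2+\delta_3}$,
\[
\mathbb{P}(|Z_1+Z_0|\leq T\mid\mathcal{F})\leq C\,T\int_{|t|\leq 1/T}|\varphi_{Z_1}(t)|\,dt,
\]
and to control the characteristic function through the factorization
\[
|\varphi_{Z_1}(t)|^2=\prod_y\Bigl[1-\frac{c}{|y|^\beta}\bigl(1-\tfrac{c}{2|y|^\beta}\bigr)\bigl(1-\cos(2t\hat\eta_{2n-1}(y))\bigr)\Bigr].
\]
On $A_n$ one has $\max_y\hat\eta_{2n-1}(y)\lesssim n^{1/2+\delta_2}$, so for $|t|\leq 1/T$ (chosen with $\delta_2\leq\delta_3$) the cosine argument stays in a bounded interval; then $1-\cos u\geq \kappa u^2$ together with $1-y\leq e^{-y}$ yields
\[
|\varphi_{Z_1}(t)|\leq \exp(-C\,t^2\,\Sigma_n),\qquad \Sigma_n:=\sum_y\frac{\hat\eta_{2n-1}(y)^2}{|y|^\beta}.
\]

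Next I would lower bound $\Sigma_n$ on $A_n$. The support of $\hat\eta_{2n-1}$ lies in $[-n^{1/2+\delta_1},n^{1/2+\delta_1}]$, so $1/|y|^\beta\gtrsim n^{-\beta(1/2+\delta_1)}$ there (using the convention $|0|^{-\beta}:=1$), and Cauchy--Schwarz combined with the total mass $\sum_y\hat\eta_{2n-1}(y)\asymp n$ and the support bound gives $\sum_y\hat\eta_{2n-1}(y)^2\gtrsim n^{3/2-\delta_1}$. This yields $\Sigma_n\gtrsim n^{3/2-\beta/2-\delta_1(1+\beta)}$. A Gaussian change of variables $u=t\sqrt{\Sigma_n}$ in Esseen's bound then produces $\mathbb{P}(A_n\setminus B_n\mid\mathcal{F})\leq C\,T\,\Sigma_n^{-1/2}=\mathcal{O}(n^{-(1-\beta)/4+\delta_3+\delta_1(1+\beta)/2})$, so setting $\delta_\beta:=(1-\beta)/4-\delta_3-\delta_1(1+\beta)/2$ works provided $\delta_1,\delta_3$ are small enough, a constraint compatible with those already imposed in the transience proof.

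The hard part will be the lower bound on $\Sigma_n$: Cauchy--Schwarz is sharp only when the occupation measure is roughly uniform on its support, whereas the weight $1/|y|^\beta$ rewards concentration near the origin, so I need to check that both the $L^\infty$ constraint from $A_{n,2}$ and the support constraint from $A_{n,1}$ are simultaneously active in the worst case, and that any anomalously heavy contribution near $y=0$ is harmless. A secondary technicality is the constraint $\delta_2\leq\delta_3$ forced by the validity range of the quadratic lower bound on $1-\cos$; this is innocuous and can be accommodated when tuning the $\delta_i$ globally.
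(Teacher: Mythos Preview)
The paper does not prove this proposition itself; it simply cites Proposition~3.2 of \cite{ref2}, where the analogous estimate is established for the square lattice. Your argument---splitting off the deterministic periodic part $Z_0$, applying Esseen's concentration inequality to the perturbation $Z_1$, and lower-bounding $\Sigma_n$ on $A_n$ via Cauchy--Schwarz together with the support constraint from $A_{n,1}$---is correct and is essentially the method of \cite{ref2}, transported to the honeycomb setting with $\hat\eta$ in place of $\eta$. The two concerns you raise at the end are not genuine obstacles: the bound $\sum_y\hat\eta_{2n-1}(y)^2\gtrsim n^{3/2-\delta_1}$ follows directly from $A_{n,1}$ and the total-mass identity $\sum_y\hat\eta_{2n-1}(y)\asymp n$, irrespective of the occupation profile, and since $\delta_2$ appears nowhere else in the transience constraints one may impose $\delta_2\le\delta_3$ at no cost.
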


Then, exactly as in the case of random environment, we use the estimates to show that $\mathbb{P}(X_{2n}=0,Y_{2n}=0)$ is summable. This proves the a.s. transience. 

\subsubsection{Proof of theorem \ref{Th:: PERTURBED} (ii)}
To prove a.s. recurrence we need to show that
$\mathbb{P}(X_{2n}=0, Y_{2n}=0\mid \mathcal{G})=\infty,$
where $\mathcal{G}:=\sigma(\overline \epsilon_y, y\in\mathbb{Z}).$
We know from Borel-Cantelli lemma that for almost every realization of the environment, we have only a finite number of randomly perturbed directions around the origin. So, in what follows fix a realization $\overline \epsilon$ such that the number of perturbations is $L<\infty$; we will compute all the probabilities conditionally to $\overline \epsilon$, although we will not always specify that.

Let 
$$\overline S_{n,e}^{\leq L}:=\sum_{i=1}^{2n} \mathbf{1}_{\{\nu_{i-1}\ne \nu_i, |Y_i|\leq L\}} \overline\epsilon_{Y_i},$$
$$\overline S_{n,e}^{\geq L}:=\sum_{i=1}^{2n} \mathbf{1}_{\{\nu_{i-1}\ne \nu_i, |Y_i|\geq L\}} \overline\epsilon_{Y_i},$$
$$ S_{n,e}^{\leq L}:=\sum_{i=1}^{2n} \mathbf{1}_{\{\nu_{i-1}\ne \nu_i, |Y_i|\leq L\}}  f(\overline Y_i),$$
$$S_{n,e}^{\geq L}:=\sum_{i=1}^{2n} \mathbf{1}_{\{\nu_{i-1}\ne \nu_i, |Y_i|\geq L\}} f(\overline Y_i).$$
Note that $S_{n,e}^{\geq L}=\overline S_{n,e}^{\geq L}$. Moreover let  $$\overline S_{n,e}=\overline S_{n,e}^{\leq L} + \overline S_{n,e}^{\geq L},$$
$$S_{n,e}=S_{n,e}^{\leq L} + S_{n,e}^{\geq L}.$$
In a completely analogous way we define the quantities corresponding to the odd steps: $S_{n,o}^{\leq L}, S_{n,o}^{\geq L}, \overline S_{n,o}^{\leq L}, \overline S_{n,o}^{\geq L},\overline S_{n,o}, S_{n,o}.$
\begin{lemma}\label{lemma:: L}
We have 
$$|\overline S_{n,e}| \leq 2\sum_{i=1}^{2n} \mathbf{1}_{\{|Y_i|\leq  L\}} + | S_{n,e}|,$$
$$|\overline S_{n,o}| \leq 2\sum_{i=1}^{2n} \mathbf{1}_{\{|Y_i|\leq  L\}} + | S_{n,o}|.$$
\end{lemma}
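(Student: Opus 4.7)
The plan is to exploit the identity $\overline S_{n,e}^{\geq L}= S_{n,e}^{\geq L}$, which is the whole point of splitting the sums at the threshold $L$: outside the perturbation region we have $\overline\epsilon_y=f(y)$, so these two pieces agree. The argument is then purely algebraic followed by the triangle inequality and a trivial bound on the ``inside'' pieces; the substance is that both $\overline\epsilon_y$ and $f(y)$ take values in $\{-1,1\}$.

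First, from the definitions I would write
\begin{equation*}
\overline S_{n,e}=\overline S_{n,e}^{\leq L}+\overline S_{n,e}^{\geq L}=\overline S_{n,e}^{\leq L}+S_{n,e}^{\geq L}=\overline S_{n,e}^{\leq L}+S_{n,e}-S_{n,e}^{\leq L},
\end{equation*}
using $\overline S_{n,e}^{\geq L}=S_{n,e}^{\geq L}$ and then $S_{n,e}^{\geq L}=S_{n,e}-S_{n,e}^{\leq L}$. Applying the triangle inequality gives
\begin{equation*}
|\overline S_{n,e}|\leq |\overline S_{n,e}^{\leq L}|+|S_{n,e}^{\leq L}|+|S_{n,e}|.
\end{equation*}

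Next, I bound each ``low-level'' term. Since $|\overline\epsilon_y|=1$ and $|f(y)|=1$ for every $y$, and the indicator $\mathbf{1}_{\{\nu_{i-1}\ne \nu_i\}}$ is at most $1$, we have termwise
\begin{equation*}
|\overline S_{n,e}^{\leq L}|\leq \sum_{i=1}^{2n}\mathbf{1}_{\{\nu_{i-1}\ne\nu_i,\,|Y_i|\leq L\}}\leq \sum_{i=1}^{2n}\mathbf{1}_{\{|Y_i|\leq L\}},
\end{equation*}
and the identical bound for $|S_{n,e}^{\leq L}|$. Adding the two gives the desired $2\sum_{i=1}^{2n}\mathbf{1}_{\{|Y_i|\leq L\}}+|S_{n,e}|$. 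The odd case is proved by the same three-line argument, replacing $\{\nu_{i-1}\ne\nu_i\}$ with $\{\nu_{i-1}=\nu_i\}$ throughout.

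There is no real obstacle here: the lemma is essentially a bookkeeping statement that isolates the effect of the $L$ perturbations near the origin, and the only things one has to be a little careful about are (a) ensuring that the threshold $L$ is chosen so that $\overline\epsilon_y=f(y)$ for every $y$ with $|y|\geq L$ (so that $\overline S_{n,e}^{\geq L}=S_{n,e}^{\geq L}$ holds as an identity, not just an inequality), and (b) noticing that the parity indicator $\mathbf{1}_{\{\nu_{i-1}\ne\nu_i\}}$ can simply be dropped when bounding $|\overline S_{n,e}^{\leq L}|$ and $|S_{n,e}^{\leq L}|$, which is what allows the same bound $\sum_{i=1}^{2n}\mathbf{1}_{\{|Y_i|\leq L\}}$ to dominate both the even and odd ``low-level'' sums.
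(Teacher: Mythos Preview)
Your proof is correct and follows essentially the same route as the paper: both exploit $\overline S_{n,e}^{\geq L}=S_{n,e}^{\geq L}$ to write $\overline S_{n,e}=\overline S_{n,e}^{\leq L}-S_{n,e}^{\leq L}+S_{n,e}$ and then apply the triangle inequality. The only cosmetic difference is that the paper bounds $|\overline S_{n,e}^{\leq L}-S_{n,e}^{\leq L}|$ directly by $2\sum_i\mathbf{1}_{\{|Y_i|\leq L\}}$ in one step, whereas you split it into $|\overline S_{n,e}^{\leq L}|+|S_{n,e}^{\leq L}|$ first; the effect is identical.
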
 
\begin{proof}
We have
\begin{align*}
|\overline S_{n,e}|
= & |\overline S_{n,e}^{\leq L} + \overline S_{n,e}^{\geq L}|\\
= & |\overline S_{n,e}^{\leq L} - S_{n,e}^{\leq L} + S_{n,e}^{\leq L}+ \overline S_{n,e}^{\geq L}|\\
= & |\overline S_{n,e}^{\leq L} -  S_{n,e}^{\leq L} +  S_{n,e}|\\
\leq & |\overline S_{n,e}^{\leq L} -  S_{n,e}^{\leq L}| + | S_{n,e}|\\
\leq & 2\sum_{i=1}^{2n} \mathbf{1}_{\{|Y_i|\leq L\}} + | S_{n,e}|.
\end{align*}
The same argument proves the analogous majorization for $\overline S_{n,o}$.
\end{proof}

We shall denote again by $\mathcal{Z}_n$ to the event (\ref{event}).
\begin{lemma} \label{corollary:bound}
We can find $c'>0$ such that for every $n\in\mathbb{N}$
$$\mathbb{E}\left(\sum_{i=1}^{2n} \mathbf{1}_{\{|Y_i|\leq L\}}\mid \mathcal{Z}_n\right)\leq c'\sqrt{n}.$$
\end{lemma}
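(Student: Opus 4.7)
The plan is to expand the conditional expectation and control each term via the Markov property combined with the local limit theorem (Lemma \ref{Y_n_behaviuour}) applied in two directions.

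First I would write
\[
\mathbb{E}\!\left(\sum_{i=1}^{2n} \mathbf{1}_{\{|Y_i|\leq L\}}\,\Big|\, \mathcal{Z}_n\right)
=\frac{1}{\mathbb{P}(\mathcal{Z}_n)}\sum_{i=1}^{2n}\sum_{\substack{|y|\leq L\\ \nu\in\{-1,1\}}}
\mathbb{P}\bigl((Y_i,\nu_i)=(y,\nu),\,\mathcal{Z}_n\bigr).
\]
Using that $(Y_n,\nu_n)$ is a Markov chain and that $\mathcal{Z}_n$ is determined by the pair $(W_{2n-1},W_{2n})$, the Markov property gives
\[
\mathbb{P}\bigl((Y_i,\nu_i)=(y,\nu),\,\mathcal{Z}_n\bigr)
=\mathbb{P}\bigl((Y_i,\nu_i)=(y,\nu)\bigr)\,
\mathbb{P}_{(y,\nu)}\bigl(W_{2n-i}=W_0\bigr),
\]
the probabilities being taken with $W_0=(-1,-1;0,1)$ as initial condition.

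Next I would apply a local limit estimate to each factor. Lemma \ref{Y_n_behaviuour} (and the same argument applied to any fixed target state) yields, for the finite-state-enriched chain $(Y_k,\nu_k)$, a uniform bound
\[
\mathbb{P}\bigl((Y_k,\nu_k)=(y,\nu)\bigr)\le \frac{C}{\sqrt{k\vee 1}},
\qquad |y|\le L,\ \nu\in\{-1,1\},
\]
and the analogous bound for $\mathbb{P}_{(y,\nu)}(W_{2n-i}=W_0)$, which is bounded by $C/\sqrt{(2n-i)\vee 1}$ since the event fixes a specific state pair and can be reached from $(y,\nu)$ in $2n-i$ steps (for bounded $|y|$ the implied constant depends only on $L$). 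Moreover, the same lemma, applied to the target state $W_0$, gives $\mathbb{P}(\mathcal{Z}_n)\ge c/\sqrt{n}$ for large $n$.

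Putting these together yields
\[
\mathbb{E}\!\left(\sum_{i=1}^{2n} \mathbf{1}_{\{|Y_i|\leq L\}}\,\Big|\, \mathcal{Z}_n\right)
\le C'(L)\sqrt{n}\sum_{i=1}^{2n}\frac{1}{\sqrt{i(2n-i)}},
\]
where the sum at the right-hand side is a Riemann sum for $\int_0^1 dx/\sqrt{x(1-x)}=\pi$ and thus remains bounded as $n\to\infty$. This gives the announced bound by $c'\sqrt{n}$. The only genuine issue is the uniformity of the local limit estimate in the starting and ending states (restricted to $|y|\le L$), which I expect to be the main technical point; it follows from the spectral-gap based proof of the LLT for the ergodic chain $(\nu_k)$ combined with the classical Gnedenko LLT for the centered sums $Y_k=\sum_{j\le k}\nu_j$, uniformly on compact sets of target values.
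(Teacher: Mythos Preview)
Your proposal is correct and follows essentially the same route as the paper: expand the conditional expectation, use the Markov property to factor $\mathbb{P}((Y_i,\nu_i)=(y,\nu),\mathcal{Z}_n)$ into a forward and a backward transition probability, bound each factor by $C/\sqrt{\cdot}$ via the local limit theorem, divide by $\mathbb{P}(\mathcal{Z}_n)\asymp 1/\sqrt{n}$, and observe that $\sum_{i=1}^{2n}1/\sqrt{i(2n-i)}$ is bounded (the paper writes this last step as the integral $\int_0^{2n}dt/\sqrt{t(2n-t)}$). Your version is in fact slightly more careful than the paper's, in that you keep track of the $\nu$-component and of the full event $\{W_{2n}=W_0\}$ rather than just $\{Y_{2n}=0\}$, and you flag the uniformity of the LLT constants in the start/end states---a point the paper leaves implicit.
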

\begin{proof}
 We have
 \begin{align*}
\mathbb{E}\left(\sum_{i=1}^{2n} \mathbf{1}_{\{|Y_i|\leq  L\}}\mid \mathcal{Z}_n\right)=& \sum_{i=1}^{2n} \mathbb{P}(|Y_i|\leq  L \mid \mathcal{Z}_n)
= \sum_{k=- L}^{L}\sum_{i=1}^{2n} \mathbb{P}(|Y_i|=k	\mid \mathcal{Z}_n).
 \end{align*}
Again by the LLT for Markov chains (\cite{refkolmogorov}), we have that $\mathbb{P}_0(Y_{i}=k)$ is majorized by $\frac{c}{\sqrt{i}}$ for an appropriate constant $c>0$ independent of $k$ and for all sufficiently large $i$; Then can find $c'>0$ large enough such that $\mathbb{P}_0(Y_{i}=k)\leq\frac{c'}{\sqrt{i}}$ for all $i>0$. Hence
\begin{align*}
\sum_{i=1}^{2n} \mathbb{P}(Y_i=k	\mid \mathcal{Z}_n)
\leq & \frac{\sum_{i=1}^{2n} \mathbb{P}_0(Y_i=k)\mathbb{P}_{k}(Y_{2n-i}=0)}{\mathbb{P}_0(\mathcal{Z}_n)}\\
\leq& C\sqrt{n}\int_{t=0}^{2n}\frac{1}{\sqrt{t(2n-t)}} dt
= C\sqrt{n}\left[\arcsin\left(\frac{t-2n}{2n}\right)\right]_0^{2n} \leq c'\sqrt{n}.
\end{align*}
\end{proof}
\begin{corollary}\label{cor:: SeSo}
We have
$$\mathbb{P}(|\overline S_{n,e}| + |\overline S_{n,o}|\leq C \sqrt{n}\mid \mathcal{Z}_n)\geq K_{C,L}>0$$
with $C>0$ and sufficiently large $n$.
\end{corollary}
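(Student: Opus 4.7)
The plan is to combine the three earlier results---Lemma \ref{lemma:: L}, Lemma \ref{corollary:bound}, and Proposition \ref{lemma:: SeSoestimate}---in essentially the only way they fit together. Summing the two bounds of Lemma \ref{lemma:: L} yields
\begin{equation*}
|\overline S_{n,e}|+|\overline S_{n,o}| \leq 4\sum_{i=1}^{2n}\mathbf{1}_{\{|Y_i|\leq L\}} + |S_{n,e}|+|S_{n,o}|,
\end{equation*}
so it suffices, conditionally on $\mathcal{Z}_n$, to bound each of the two terms on the right by a constant multiple of $\sqrt{n}$ simultaneously with positive probability.

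The first term is controlled by Markov's inequality applied to Lemma \ref{corollary:bound}: for any $a>0$,
\begin{equation*}
\mathbb{P}\left(\sum_{i=1}^{2n}\mathbf{1}_{\{|Y_i|\leq L\}} > a\sqrt{n} \,\Big|\, \mathcal{Z}_n\right) \leq \frac{c'}{a}.
\end{equation*}
The second term is handled directly by Proposition \ref{lemma:: SeSoestimate}: fixing any convenient $C_0>0$ yields a positive lower bound $\delta_{C_0}$ on $\mathbb{P}(|S_{n,e}|+|S_{n,o}|\leq C_0\sqrt{n}\mid \mathcal{Z}_n)$. I would then choose the constants in the natural order---first a convenient $C_0$ (say $C_0=1$), which freezes $\delta:=\delta_{C_0}$, and then $a:=2c'/\delta$, so that the Markov bound becomes $\delta/2$. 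A union bound on the two favorable events gives a conditional probability of at least $\delta-\delta/2=\delta/2>0$, and on their intersection the displayed inequality above yields $|\overline S_{n,e}|+|\overline S_{n,o}|\leq (4a+C_0)\sqrt{n}$. Setting $C:=4a+C_0$ and $K_{C,L}:=\delta/2$ establishes the claim.

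I do not anticipate a genuine obstacle here: the three preparatory results have been arranged precisely so that this Markov-plus-union-bound combination works. The only point requiring care is the ordering of the choices---$C_0$ must be fixed before $a$, so that $\delta$ is a constant at the moment Markov is invoked---together with the observation that the resulting $C$ and $K_{C,L}$ depend on $L$ only through the constant $c'$ of Lemma \ref{corollary:bound}, and are independent of $n$.
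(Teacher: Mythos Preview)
Your proposal is correct and follows the same route as the paper: bound $|\overline S_{n,e}|+|\overline S_{n,o}|$ via Lemma~\ref{lemma:: L}, control the occupation term by Markov's inequality together with Lemma~\ref{corollary:bound}, control $|S_{n,e}|+|S_{n,o}|$ by Proposition~\ref{lemma:: SeSoestimate}, and intersect the two favorable events. The only difference is bookkeeping: the paper chooses $C$ large so that both conditional probabilities are close to $1$ and hence overlap, whereas you fix $C_0$ first to freeze $\delta$ and then tune $a$; your ordering is in fact slightly tidier, since it uses only the positivity $\delta_{C_0}>0$ actually stated in Proposition~\ref{lemma:: SeSoestimate} and does not rely on the implicit claim that $\delta_C\to 1$ as $C\to\infty$.
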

\begin{proof}
By lemma \ref{lemma:: L} we have for large $n$
\begin{align*}
\mathbb{P}\left(\frac{|\overline S_{n,e}| + |\overline S_{n,o}|}{\sqrt{n}} \leq  C \mid \mathcal{Z}_n\right)\geq&\mathbb{P}\left(\frac{4\sum_{i=1}^{2n} \mathbf{1}_{\{|Y_i|\leq L\}} + | S_{n,e}| + | S_{n,o}|}{\sqrt{n}} \leq  C \mid \mathcal{Z}_n\right)\\
\geq& \mathbb{P}\left(\frac{| S_{n,e}| + | S_{n,o}|}{\sqrt{n}} \leq  C/2, \frac{\sum_{i=1}^{2n} \mathbf{1}_{\{|Y_i|\leq L\}}}{\sqrt{n}}\leq C/2  \mid \mathcal{Z}_n\right)
\end{align*}
Now, by proposition \ref{lemma:: SeSoestimate} $$\mathbb{P}\left(\frac{| S_{n,e}| + | S_{n,o}|}{\sqrt{n}} \leq  C/2 \mid \mathcal{Z}_n\right)\geq \delta_{C}>0$$ 
and by the Markov inequality together with lemma \ref{corollary:bound} $$\mathbb{P}\left(\frac{\sum_{i=1}^{2n} \mathbf{1}_{\{|Y_i|\leq L\}}}{\sqrt{n}}\leq C/2 \mid \mathcal{Z}_n\right)\geq \delta_{C,L}'>0,$$ 
where both $\delta_C$ and $\delta_{C,L}'$ tend to $1$ as $C$ grows to infinity. So if we take a sufficiently large $C$ s.t. $\delta_{C',L}>1-\delta_C$, the intersection between these two events will have positive probability.
\end{proof}
Now, following the same argument used in the proof of theorem \ref{Th:: PERIODIC} (we shall not repeat it), one shows recurrence for the random walk conditionally to the environment $\overline\epsilon$. But since the choice of $\overline\epsilon$ is arbitrary, with the only requirement that there are only a finite number of perturbations around the origin, and since this requirement is satisfied by a.e. realization, the proof of a.s. recurrence is complete.

\section{Conclusion}
This paper shows that the random walk has the same recurrence behaviour as in the square grid lattice case.
It would be desirable to extend our results to a more general class of planar graphs with some undirected and some
directed bonds. We are confident that the techniques developed here may be useful for obtaining results on recurrence
in a more general setting.

\bibliographystyle{alea3}
\bibliography{BosiCampaninoArticle}

\begin{thebibliography}{11}
\providecommand{\natexlab}[1]{#1}
\providecommand{\url}[1]{\texttt{#1}}
\providecommand{\urlprefix}{URL }
\expandafter\ifx\csname urlstyle\endcsname\relax
  \providecommand{\doi}[1]{doi:\discretionary{}{}{}#1}\else
  \providecommand{\doi}{doi:\discretionary{}{}{}\begingroup
  \urlstyle{rm}\Url}\fi
\providecommand{\eprint}[2][]{\url{#2}}

\bibitem[{Campanino and Petritis(2003)}]{ref1}
M.~Campanino and D.~Petritis.
\newblock Random walks on randomly oriented lattices.
\newblock \emph{Mark. Proc. Rel. Fields} \textbf{9}, 391--412 (2003).

\bibitem[{Campanino and Petritis(2014)}]{ref2}
M.~Campanino and D.~Petritis.
\newblock Type transition of simple random walks on randomly directed regular
  lattices.
\newblock \emph{J. Appl. Prob.} \textbf{51}, 1065--1080 (2014).

\bibitem[{Castell et~al.(2011)Castell, Guillotin-Plantard, P{\`e}ne and
  Shapira}]{guillotin4}
F.~Castell, N.~Guillotin-Plantard, F.~P{\`e}ne and B.~Shapira.
\newblock A local limit theorem for random walks in random scenery and on
  randomly oriented lattices.
\newblock \emph{The Annals of Probability} \textbf{39}, 2079--2118 (2011).

\bibitem[{Devulder and P{\`e}ne(2013)}]{pene2}
A.~Devulder and F.~P{\`e}ne.
\newblock Random walk in random environment in a two-dimensional stratified
  medium with orientations.
\newblock \emph{Electron. J. Probab.} \textbf{18}, 1--23 (2013).

\bibitem[{Feller(1966)}]{Feller2}
W.~Feller.
\newblock \emph{An Introduction to Probability Theory and Its Applications}.
\newblock John Wiley \& Sons, New York. (1966).

\bibitem[{Gnedenko(1962)}]{Gnedenko}
B.V. Gnedenko.
\newblock \emph{The Theory of Probability}.
\newblock Chelsea Publ. Comp., New York. (1962).

\bibitem[{Guillotin-Plantard and Le~Ny(2008)}]{guillotin3}
N.~Guillotin-Plantard and A.~Le~Ny.
\newblock A functional limit theorem for a {2D}-random walk with dependent
  marginals.
\newblock \emph{Elec. Comm. in Probab.} \textbf{13}, 337--351 (2008).

\bibitem[{Guillotin-Plantard and P{\`e}ne(2015)}]{guillotin5}
N.~Guillotin-Plantard and F.~P{\`e}ne.
\newblock On the range of {Campanino and Petritis} random walk.
\newblock \emph{hal-01183813}  (2015).

\bibitem[{Kolmogorov(1949)}]{refkolmogorov}
A.N. Kolmogorov.
\newblock A local limit theorem for classical {Markov} chains.
\newblock \emph{Izv. Akad. Nauk SSSR Sero Mat.} \textbf{13}, 281--300 (1949).

\bibitem[{P{\`e}ne(2009)}]{pene}
F.~P{\`e}ne.
\newblock Transient random walk in {$Z^2$} with stationary orientations.
\newblock \emph{ESAIM: Probab. and Stat.} \textbf{13}, 417--436 (2009).

\bibitem[{Woess(2009)}]{Woess}
W.~Woess.
\newblock \emph{Denumerable {Markov} Chains}.
\newblock Eur. Math. Soc., Zurich (2009).

\end{thebibliography}

\end{document}